\newtheorem{theorem}{Theorem}[section]
\newtheorem*{theorem*}{Theorem}
\newtheorem{lemma}[theorem]{Lemma}
\newtheorem*{lemma*}{Lemma}
\newtheorem{claim}[theorem]{Claim}
\newtheorem*{claim*}{Claim}
\newtheorem{proposition}[theorem]{Proposition}
\newtheorem*{proposition*}{Proposition}
\newtheorem{corollary}[theorem]{Corollary}
\newtheorem*{corollary*}{Corollary}
\theoremstyle{definition}
\newcommand{\xb}{\mathbf{x}}
\newcommand{\reals}{\mathbb{R}}
\newcommand{\prm}{^{\prime}}
\newcommand{\abs}[1]{\left|#1\right|}
\newcommand{\nats}{\mathbb{N}}
\newcommand{\expect}[1]{\mathbb{E}\left[#1\right]}
\newcommand{\bigexpect}[1]{\mathbb{E}\big[#1\big]}
\newcommand{\tendsto}{\rightarrow}
\newcommand{\prob}[1]{\mathbf{P}\!\left[#1\right]}
\newcommand{\e}{{\rm e}}
\newcommand{\calP}{\mathcal{P}}
\newcommand{\R}{\mathbb{R}}
\newcommand{\eps}{\varepsilon}
\newenvironment{cframed}[1][blue]
  {\begin{tcolorbox}[colframe=#1,colback=white]}
  {\end{tcolorbox}}
\newcommand{\remove}[1]{}
\newcommand{\he}{\hat{e}}
\newcommand{\hG}{\hat{G}}
\newcommand{\htau}{\hat{\tau}}
\def \rgp {\mathbf{G}}
\newcommand{\hamiltonicity}{\mathrm{HC}}
\def \connectivity {{{\rm{con}}}}
\title{Building graphs with high minimum degree on a budget}
\author{
	Kyriakos Katsamaktsis\thanks{
		Department of Mathematics,
		University College London,
		Gower Street, London WC1E~6BT, UK.
		Email: \texttt{kyriakos.katsamaktsis.21}@\texttt{ucl.ac.uk}.
        Research supported by the Engineering and Physical Sciences Research Council [grant number EP/W523835/1].
	}
	\and
	Shoham Letzter\thanks{
		Department of Mathematics, 
		University College London, 
		Gower Street, London WC1E~6BT, UK. 
		Email: \texttt{s.letzter}@\texttt{ucl.ac.uk}. 
		Research supported by the Royal Society.
    }
}
\date{\today}
\begin{document}

\maketitle
\begin{abstract}
\setlength{\parskip}{\medskipamount}
\setlength{\parindent}{0pt}
\noindent
   We consider the problem of constructing a graph of minimum degree \(k\ge 1\) in the following controlled random graph process, introduced recently by Frieze, Krivelevich and Michaeli. 
   Suppose the edges of the complete graph on \(n\) vertices are permuted uniformly at random. A player, Builder, sees the edges one by one, and must decide irrevocably upon seeing each edge whether to purchase it or not.

   Suppose Builder
   purchases an edge if and only if at least one endpoint has degree less than \(k\) in her graph.
   Frieze, Krivelevich and Michaeli observed that this strategy succeeds in building a graph of minimum degree at least \(k\) by \(\tau_k\), the hitting time for having minimum degree \(k\).
   They conjectured that any strategy using \(\eps n\) fewer edges, where \(\eps>0\) is any constant, fails with high probability.

    In this paper we disprove their conjecture. We show that for \(k\ge 2\) Builder has a strategy which purchases \(n/9\) fewer edges and succeeds with high probability in building a graph of minimum degree at least \(k\) by \(\tau_k\).
    For \(k=1\) we show that any strategy using \(\eps n\) fewer edges fails with probability bounded away from 0, and exhibit such a strategy that succeeds with probability bounded away from 0.
\end{abstract}

\section{Introduction}

The \emph{uniform random graph} \(\rgp_{n,m}\) is the random graph drawn uniformly out of all labelled \(n\)-vertex graphs with \(m\) edges.
The series of papers by Erd\H{o}s and R\'enyi during 1959-1966~\cite{erdHos1960evolution,erdHos1961strength,erdos59,erdos66} started an in-depth study of random graphs, where they explored how a typical \(\rgp_{n,m}\) varies as \(m\) grows from \(0\) to \(\binom{n}{2}\).
The study of random discrete structures and their applications has grown to one of the most active and fruitful areas of research in combinatorics, with connections to many other fields of study, including theoretical computer science, number theory, analysis and statistical physics. 

Let \(N= \binom{n}{2}\) and \(e_1,\hdots, e_N\) be a uniformly random permutation of the edges of the complete graph on \(V=[n]\).
Then we can identify \(\rgp_{n,m}\) with the first \(m\) edges in the permutation.
The \emph{random graph process} is the nested sequence \(\left( \rgp_{n,m} \right)_{m=0}^N\) of random graphs.
One of the most striking discoveries of Erd\H{o}s and R\'enyi was that several \emph{increasing} properties appear rather suddenly in the random graph process.
Recall that a \emph{graph property} \(\calP\)
is a collection of graphs with vertices in \(V\), and
is \emph{increasing} if it is closed under the addition of edges with both ends in \(V\).
For example, the property of being Hamiltonian and the property of being connected are increasing properties.
They showed that for several increasing properties \(\calP\), including being connected, there is an \(m_0 = m_0(n)\) such that, when \(m\) is much larger than \(m_0\) (in an appropriate sense), \emph{with high probability}\footnote{We say that a sequence of events \((A_n)_{n\in \nats}\) holds \emph{with high probability} if \(\lim_{n\tendsto \infty} \prob{A_n} \tendsto 1\).} \(\rgp_{n,m}\) satisfies \(\calP\), and when \(m\) is much smaller than \(m_0\), with high probability \(\rgp_{n,m}\) does not satisfy \(\calP\).
The \emph{hitting time} for an increasing property \(\calP\) is defined to be the random variable
\(\tau_\calP = \min \{m\in [N]: \text{ \(\rgp_{n,m}\) satisfies \(\calP\)}  \}\).
A central problem in the theory of random graphs asks to determine asymptotically the hitting time for various graph properties.

The question of determining \(\tau_\hamiltonicity\), the hitting time for Hamiltonicity, was first raised by Erd\H{o}s and R\'enyi in 1960~\cite{erdHos1960evolution}, and subsequently studied by several authors.
The breakthrough is due to P\'osa~\cite{posa76} and Korshunov~\cite{korshunov76}, who independently showed in 1976 that, for some constant $C$, when \(m \ge Cn\log n\), with high probability \(\rgp_{n,m}\) is Hamiltonian.
Their result was improved by various authors until Koml\'{o}s and Szemer\'{e}di in 1983~\cite{komlos-szemeredi83} and independently Korshunov in 1977~\cite{korshunov77} 
showed that with high probability it is asymptotically the same as \(\tau_{\ge 2}\), the hitting time for having minimum degree at least \(2\).
Erd\H{o}s and R\'enyi in 1961~\cite{erdHos1961strength} proved that with high probability \(\tau_{\ge_2} = (1+o(1))\frac{n\log n}{2}\),
so this seems to settle the question of `when' the random graph process becomes Hamiltonian.
However, the precise control on the number of edges allows us to ask more probing questions.
Koml\'{o}s and Szemer\'{e}di in 1983~\cite{komlos-szemeredi83} claimed and a year later Bollob{\'a}s proved~\cite{bollobas84} that, in fact, with high probability, \(\tau_\hamiltonicity = \tau_{\ge 2}\).
That is, with high probability, the very edge that  increases the minimum degree to 2
is the one that makes the random graph process Hamiltonian.
This type of result is the strongest possible one can hope for regarding the emergence of an increasing property in the random graph process.

Clearly a graph with \(n\) vertices needs only \(n\) edges to be Hamiltonian, far fewer than the approximately
\(\frac{n \log n}{2}\) at the hitting time.
Is there an algorithm that finds a Hamilton cycle in the random graph process by time \(\tau_\hamiltonicity\) by considering only \(o(n\log n)\) edges?
Motivated by this question, Frieze, Krivelevich and Michaeli~\cite{budget} introduced the following controlled random graph process.
Suppose there is a player, Builder, who sees the edges in the random permutation \(e_1,\hdots, e_N\) one by one, and must decide irrevocably upon seeing \(e_i\) whether to purchase it or not.
A \emph{\((t,b)\)-strategy}  is an online algorithm (deterministic or randomised) that Builder follows in the above model where she sees only the first \(t\) edges in the random permutation of \(E(K_n)\) and is allowed to purchase at most \(b\) of them.
Builder's objective is for her graph to satisfy  a given increasing graph property \(\calP\).
For example, there is a simple 
\((\tau_{\connectivity}, n-1)\)-strategy such that Builder's graph is connected at the hitting time for connectivity: Builder purchases an edge if and only if it decreases the number of connected components in her graph.
Regarding Hamiltonicity, Frieze, Krivelevich and Michaeli~\cite[Theorem 3]{budget} showed that there exists a \((\tau_\hamiltonicity,Cn)\)-strategy such that with high probability \(B_{\tau_\hamiltonicity}\) is Hamiltonian, where \(C\) is a large constant.
Anastos~\cite{anastos} observed that \(C\) cannot be improved to 
\(1+o(1)\). 
However, in the same paper he showed that if Builder is allowed to see an additional \(\eps \tau_\hamiltonicity\) edges after \(\tau_\hamiltonicity\), then she can purchase at most \((1+o(1))n\) edges and ensure that with high probability her graph is Hamiltonian.

The current paper and 
Theorem 1 in~\cite{budget} consider the problem of Builder constructing a graph of minimum degree at least a constant \(k\ge 1\) at the hitting time 
\(\tau_k\) for the property of having minimum degree at least \(k\).
Let \(B_i\) be
Builder's graph just before the edge \(e_{i+1}\) is revealed.
Suppose that Builder follows the obvious greedy strategy, i.e.\ that she purchases 
\(e_i\) if and only if at least one of its ends has degree at most \(k-1\) in \(B_{i-1}\).
This strategy clearly succeeds in building a graph with minimum degree at least \(k\) at time \(\tau_k\).
By following this strategy, Builder's graph is distributed according to the \emph{k-th nearest neighbour} random graph model.
This model was studied by Frieze and Cooper~\cite{knn-cooper-frieze}, who proved that with high probability the resulting graph \(O_k\) has \((o_k+o(1)) n\) edges, for some explicit constant \(o_k \in (k/2, 3k/4]\) (cf.\ \Cref{cor:ok-bound}).
Hence, this gives a
\((\tau_k, (o_k+o(1))n)\)-strategy for Builder that (always) succeeds in constructing a graph of minimum degree at least \(k\).

Conjecture 7 in~\cite{budget} asserts that for any constant \(\eps>0\),
if Builder follows any \((\tau_k, (o_k-\eps)n)\)-strategy, with high probability her graph fails to have minimum degree at least \(k\).
Since \(o_1=3/4\) 
and \(o_k \ge \frac{k}{2} + \frac{3}{8}\) for \(k\ge 2\)
(cf.\ \Cref{cor:ok-bound}),
the next two theorems  disprove this conjecture, for the cases \(k\ge 2\) and \(k=1\) respectively.
\begin{theorem}\label{thm:k-at-least-2}
Let \(k\ge 2\) be an integer and \(\delta>0\) be constant.
Builder has a 
\((\tau_k, (k/2 + 2^{-k} + \delta)n)\)-strategy  that with high probability yields a graph with minimum degree at least \(k\).
\end{theorem}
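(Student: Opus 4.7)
The plan is to design an explicit strategy $\calA$ for Builder and analyse it in two parts: verifying that it succeeds (reaches minimum degree $k$ by $\tau_k$), and bounding the number of purchased edges by $(k/2+2^{-k}+\delta)n$. The starting point is the observation that the greedy $k$-nearest neighbour strategy is inefficient because, when it sees an edge $(u,v)$ with $u$ of degree $k-1$ and $v$ of degree $\ge k$ in Builder's graph, it purchases immediately, benefiting only $u$. A smarter strategy defers such purchases in the hope that $u$ will later see a ``double-use'' edge $(u,w)$ with $w$ also still below degree $k$.

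My proposed strategy is parameterised by a small constant $\eta = \eta(\delta, k)$. Upon seeing $(u,v)$ at time $t$, Builder purchases it if and only if (a) both endpoints have degree $<k$ in $B_{t-1}$, or (b) at least one endpoint has degree $\le k-2$ in $B_{t-1}$, or (c) exactly one endpoint, say $u$, has degree $k-1$ while $v$ has degree $\ge k$ and a ``deadline condition'' holds---informally, $u$ has seen so many post-$(k-1)$ incident edges that further deferral becomes unsafe. Otherwise, Builder defers.

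To prove success, I would show that with high probability every vertex reaches degree $k$ in Builder's graph by $\tau_k$. This reduces to controlling vertices sitting at degree $k-1$ in $B$: after such $u$ reaches this state, standard properties of $\rgp_{n,m}$ near $m \approx \tfrac{1}{2}n\log n$ give that $u$ sees $\Theta(\log n)$ further incident edges before $\tau_k$, and the deadline in rule (c) ensures Builder purchases one of them in time. For the edge count, I would compare $B$ with the greedy graph: rules (a) and (b) contribute roughly $\tfrac{k-1}{2}n + \tfrac{n}{2}$ edges, by a coupling with the $(k-1)$-nearest neighbour model and a careful accounting of double-use edges, while rule (c) contributes approximately $2^{-k} n + o(n)$ additional edges. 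The factor $2^{-k}$ should arise because, once $u$ reaches degree $k-1$, rule (c) fires only when \emph{none} of $u$'s roughly $k$ relevant subsequent incident edges is double-use; in the appropriate conditional distribution, this has probability close to $2^{-k}$.

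The main obstacle will be quantifying this last probability tightly. The joint distribution of low-degree vertices and their incident-edge orders near $\tau_k$ is delicate; standard concentration must be combined with a careful conditioning on $\tau_k$ and on the identity of the stragglers. Tuning the deadline in rule (c) so that it yields precisely $2^{-k}$ savings (and not something weaker), while ensuring at most $o(n)$ vertices ever fail to reach degree $k$ and while keeping Builder's edge count concentrated around its mean, is likely the most technical portion of the proof.
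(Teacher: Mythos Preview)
Your strategy differs substantively from the paper's, and the accounting you sketch does not close. The paper uses a two-phase strategy: for the first $Cn$ steps, Builder purchases $e_i$ if either both endpoints have degree $<k$ in $B_{i-1}$ (``efficient'') or at least one endpoint has degree $0$ in the \emph{random graph process} $\rgp_{n,i-1}$ (``inefficient''); after step $Cn$ she reverts to plain greedy. The inefficient rule only buys the \emph{first} incident edge to each vertex, not the first $k-1$ as your rule (b) does. For $k=2$ the two rules happen to coincide, but for $k\ge 3$ your rule (b) is strictly more aggressive, and this matters for the edge count.

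Your attribution of the $2^{-k}$ term to rule (c) is where the argument breaks. You claim rule (c) fires with probability about $2^{-k}$ per vertex because ``roughly $k$ subsequent incident edges'' each fail to be double-use with probability about $1/2$. But that probability is the current fraction of vertices with $\deg_B < k$, which is near $1$ early and near $0$ late; there is no reason it averages to $1/2$ over any fixed window. In fact, since your rules (a)+(b) already contain the hypothesis of \Cref{lemma:very-greedy}, after $Cn$ steps at most $\eps n$ vertices sit at degree $<k$ in $B$; so any reasonable deadline makes rule (c) fire for $O(\eps n)$ vertices, not $2^{-k}n$. The $2^{-k}$ therefore cannot come from rule (c); it would have to be hidden inside rules (a)+(b). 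But you have not bounded those: your claim that (a)+(b) cost $\approx kn/2$ is unsupported, and for $k\ge 3$ rule (b) alone already realises the full $O_{k-1}$ graph with $o_{k-1}n > (k-1)n/2$ edges, with rule (a) contributing further edges on top whose count you do not control.

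In the paper, the $2^{-k}$ arises from a different and exact mechanism: each inefficient edge raises one endpoint from degree $0$ to $1$ in $\rgp$ while the other endpoint, already of degree $\ge k$ in $B$ and hence in $\rgp$, moves to degree $r\ge k+1$. Thus the inefficient edges lie in $\bigcup_{r\ge k+1}\Phi(r,1)$, and a McDiarmid-based concentration argument gives $\phi(r,1)=(1+o(1))2^{-r}n$, whence $\sum_{r\ge k+1}\phi(r,1)\le (2^{-k}+o(1))n$. No per-vertex deadline or probabilistic deferral heuristic is involved.
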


\begin{theorem} \label{thm:k-1:strategy}
    Let \(C>0\) be constant.
    Builder has a \(\left(\tau_1, (1/2 + C^{-1/2}/2)n \right)\)-strategy that with probability at least 
    \((1-o(1))\frac{\sqrt{C}}{\e^C}\) yields a graph with minimum degree at least 1.
\end{theorem}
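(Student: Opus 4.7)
We describe a two-phase online strategy for Builder, parameterised by $C$. In \emph{Phase 1}, which lasts for the first $t_1 = \lambda n / 2$ revealed edges (with $\lambda$ of order $\sqrt{C}$, chosen so that the target matching is produced), Builder purchases an edge if and only if both of its endpoints are currently isolated in her graph; this constructs a random matching. In \emph{Phase 2}, from time $t_1$ up to $\tau_1$, Builder reverts to the greedy strategy, purchasing any edge that is incident to a currently isolated vertex. By the Phase 2 rule, every vertex that is still isolated at the end of Phase 1 is covered by Builder as soon as it sees a further incident edge, so Builder succeeds iff no Phase 1-isolated vertex fails to be touched by the random process in the window $(t_1,\tau_1]$.

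The budget calculation is by the differential-equations method. Writing $u(s)$ for the fraction of vertices isolated in Builder's graph after $sn$ edges, one has $u'(s) = -2u(s)^2$ during Phase 1 (solution $u(s) = 1/(1+2s)$, since only both-isolated edges matter) and $u'(s) = -2u(s)$ during Phase 2 (since every at-least-one-isolated edge is bought). Integrating the per-step purchase rate along these trajectories, the total number of edges purchased concentrates, by a standard Azuma/Wormald-type argument, around $n(1/2 + u_1/2 - u_1^2/4)$ with $u_1 = 1/(1+\lambda)$. Choosing $\lambda$ so that this value is at most $(1/2+1/(2\sqrt{C}))n$ establishes that the purchased count stays within the allowed budget with high probability.

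For the success probability, we estimate the expected number of \emph{bad} vertices, i.e. those isolated in Builder's graph at $\tau_1$. Writing $d_2(v)$ for the number of edges incident to $v$ in positions $(t_1,\tau_1]$, the discussion above shows that $v$ is bad iff $v$ is isolated at time $t_1$ and $d_2(v)=0$. The key observation is that, under the Poissonisation of the edge-permutation process, the incident edges of each $v$ form an approximate rate-$2/n$ Poisson process on $[0,\tau_1]$, so $d_2(v)$ and the Phase 1 history of $v$ are nearly independent; combined with $\tau_1 = n(\log n)/2 + O(n)$ w.h.p.\ (Erd\H{o}s--R\'enyi), the probability that $v$ is isolated in Builder at time $t_1$ is $u_1 + o(1)$ and the probability that $d_2(v)=0$ is $e^{\lambda}/n \cdot (1+o(1))$. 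Summing over $v$, one obtains an expected number of bad vertices converging to an explicit constant $\mu(\lambda,C)$. A second-moment / Stein--Chen Poisson approximation, together with the fact that Bad events are only weakly positively correlated (they depend through the shared matching process only via an $O(1/n)$-order effect), then gives $\Pr[\text{no bad vertex}] = (1+o(1))e^{-\mu}$. Finally we choose $\lambda$ so that the budget constraint is met and $e^{-\mu} \ge \sqrt{C}/e^C$ simultaneously, and verify by elementary calculus that such a choice exists.

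\paragraph{Main obstacle.} The crux is the simultaneous control of budget and success probability, since requiring $\mu$ to be as small as $C - (\log C)/2$ while forcing the budget down to $(1/2+1/(2\sqrt{C}))n$ leaves very little slack in $\lambda$; making the two estimates tight to the right order demands a careful Poisson approximation with additive error terms, and a clean handling of the (small) correlation between Phase 1 isolation of different vertices induced by the matching dynamics.
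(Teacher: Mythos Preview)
Your two-phase strategy matches the paper's, and the budget analysis via differential equations is sound (though more elaborate than needed). The genuine gap is in the success probability: the Poisson approximation for the number of bad vertices is incorrect, and with it the calibration of $\lambda$ fails.

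You assert that ``Bad events are only weakly positively correlated (they depend through the shared matching process only via an $O(1/n)$-order effect)''. But the dominant source of correlation is not the Phase~1 matching---it is the random stopping time $\tau_1$. Condition on $|Y_{t_1}|=y$ and $|Z_{t_1}|=z$. After time $t_1$, by exchangeability the order in which the vertices of $Y_{t_1}\cup Z_{t_1}$ are first touched is (essentially) a uniformly random permutation, and $\tau_1$ is the moment the last element of $Z_{t_1}$ is touched; a vertex $v\in Y_{t_1}$ is bad precisely when it comes after every element of $Z_{t_1}$ in this order. Hence $\prob{v\text{ bad}}\approx 1/(z{+}1)$ while $\prob{v,w\text{ both bad}}\approx 2/\big((z{+}1)(z{+}2)\big)$, so the pair correlation is of constant order, not $o(1)$. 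The number of bad vertices is in fact approximately \emph{geometric} with parameter $z/(y{+}z)$, giving $\prob{\text{no bad vertex}}\approx z/(y{+}z)=1/(1{+}\mu)$ rather than $e^{-\mu}$. Now $\mu\approx u_1 e^{\lambda}=e^{\lambda}/(1{+}\lambda)$ is increasing in $\lambda$, and your budget constraint forces $u_1\lesssim 1/\sqrt{C}$, i.e.\ $\lambda\gtrsim\sqrt{C}$; this makes $\mu\gtrsim e^{\sqrt{C}}/\sqrt{C}\gg C$ for large $C$, so the inequality $e^{-\mu}\ge\sqrt{C}\,e^{-C}$ you intend to ``verify by elementary calculus'' is false.

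The paper bypasses all distributional approximation with a symmetry argument (its Lemma~4.1): the last vertex of $Y_m\cup Z_m$ to be touched after step $m$ is uniform on that set, and Builder succeeds whenever it lands in $Z_m$, giving success probability $(1-o(1))\,|Z_m|/(|Y_m|{+}|Z_m|)$ directly. Taking $m=Cn$ (not $\sqrt{C}\,n$), the crude bounds $|Y_{Cn}\cup Z_{Cn}|\le n/\sqrt{C}$ and $|Z_{Cn}|\ge(1{-}o(1))e^{-C}n$ yield $\sqrt{C}\,e^{-C}$ at once, and the budget is the one-line count $e(B_{\tau_1})\le (n-|Y_{Cn}\cup Z_{Cn}|)/2+|Y_{Cn}\cup Z_{Cn}|$. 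Your strategy with $\lambda\sim\sqrt{C}$ would in fact succeed under this correct analysis (since then $1/(1{+}\mu)\approx\sqrt{C}\,e^{-\sqrt{C}}\ge\sqrt{C}\,e^{-C}$); it is the Stein--Chen route, not the strategy, that fails.
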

However, as the next theorem shows, a weaker version of \cite[Conjecture 7]{budget} holds for \(k=1\).
Recall that \(o_1 = 3/4\).
\begin{theorem} \label{thm:k-1:any-fails}
Let \(\eps>0\) be constant.
Any \(\left(\tau_1, (3/4-\eps)n\right)\)-strategy of Builder fails, with probability at least \(\frac{\eps^3}{100}\), in building a graph with minimum degree at least 1.
\end{theorem}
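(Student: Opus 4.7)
The plan is to show that for any strategy $\sigma$ of Builder with budget $B = (3/4 - \eps)n$, the probability of failure is at least $\eps^3/100$.

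I would start by establishing a necessary condition for success. Classify each purchase of $\sigma$ by its state at the moment of purchase: a \emph{double-buy} has both endpoints of degree $0$ in Builder's current graph, a \emph{single-buy} has exactly one, and a \emph{zero-buy} has neither. Writing $a_0, a_1, a_2$ for the counts, success (full coverage) requires $2 a_2 + a_1 = n$, while the budget gives $a_0 + a_1 + a_2 \leq B$. Combining, $a_2 \geq n - B = (1/4 + \eps)n$. So $\sigma$ can succeed only if it makes at least $(1/4 + \eps)n$ double-buys.

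Next, I would bound the number of double-buys using the structure of the underlying random graph. An edge $\{u,v\}$ can be a double-buy at its reveal time only if both $u, v$ are uncovered in Builder's graph, which happens automatically when $\{u,v\}$ is the first revealed incident edge for both endpoints --- call such an edge \emph{early} --- and otherwise requires that Builder previously skipped some earlier edge incident to an endpoint. The number of early edges $X$ in $G_{n,\tau_1}$ satisfies $\mathbb{E}[X] = \binom{n}{2}/(2n-3) \sim n/4$ (each edge $\{u,v\}$ is first among the $2n-3$ edges incident to $u$ or $v$ with probability $1/(2n-3)$), and a second-moment estimate gives $X \leq (1/4 + \eps/2)n$ with probability $1 - o(1)$. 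Thus, on this event, any successful $\sigma$ must produce at least $\eps n/2$ \emph{late} double-buys, each enabled by a prior skip at a non-fresh endpoint.

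The heart of the argument is to show that these forced skips carry enough coverage risk to give the claimed failure probability. For each edge $\{u, v\}$ that $\sigma$ skips while $u$ is uncovered, $u$ can only be covered by a later incident edge; with positive probability, bounded below in terms of the remaining number of edges incident to $u$ in the permutation and the covered/uncovered status of their partners, $u$ remains uncovered at $\tau_1$. Summing these risks over the $\Omega(\eps n)$ required skips, I would argue the expected number of uncovered vertices at $\tau_1$ is at least $\Omega(\eps^2 n)$, and a reverse Markov argument (using the trivial bound of $n$ on the number of uncovered vertices) yields a failure probability of at least $\Omega(\eps^2)$, which dominates $\eps^3/100$ after absorbing constants.

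The main obstacle is the third step: Builder's strategy is adaptive, so she may try to hedge against coverage failures by prioritizing skips at vertices with many remaining incident edges, or by using information from previously revealed edges to choose which skips to make. The challenge is to show that this hedging cannot eliminate all risk --- that the sheer number of forced skips must expose Builder to constant-probability uncovered-vertex failures. I expect the argument will use either a potential function tracking the ``coverage reserve'' at each uncovered vertex, or a worst-case analysis over adaptive strategies via a coupling to a simpler random process.
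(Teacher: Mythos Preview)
Your Steps 1 and 2 are correct: the double-buy count and the early-edge estimate are both right, and together they do force $\Omega(\eps n)$ late double-buys on any successful run. The gap is entirely in Step 3, and it is structural rather than a missing detail. You identify the late double-buys only on \emph{successful} runs, and the vertices that were skipped-then-recovered are by definition covered at the end, so those particular skips carry no realised risk; you cannot sum over them to produce an expected number of uncovered vertices. To turn ``many skips'' into ``positive failure probability'' you need a stopping time at which many uncovered-in-$B$, covered-in-$\rgp$ vertices exist \emph{simultaneously}, together with a lower bound on failure probability conditional on that state that is valid against every continuation strategy. Your proposal does not isolate such a moment, and the target $\Omega(\eps^2 n)$ for the expected number of uncovered vertices at $\tau_1$ is in fact wrong: one uncovered vertex suffices to fail, and there are budget-respecting strategies with failure probability close to $1$ whose expected uncovered count is $O(1)$.

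The paper supplies exactly this missing mechanism. It tracks $Y_i = \{v : \deg_{B_i}(v) = 0,\ \deg_{\rgp_{n,i}}(v) \ge 1\}$ and $Z_i = \{v : \deg_{\rgp_{n,i}}(v) = 0\}$. A comparison with $O_1$ (your early-edge count in different clothing) shows that if $|Y_i| \le \delta n$ for all $i \le Cn$ then Builder has spent nearly $o_1 n = 3n/4$ edges; so the budget forces some $i$ with $|Y_i| \ge \delta n$. The key lemma is then: given any state at step $m$, for \emph{any} continuation strategy the failure probability is at least $(1-o(1))\,|Y_m|/(|Y_m|+|Z_m|)$. The proof is a symmetry argument you are missing: the last vertex of $Y_m \cup Z_m$ to acquire an incident edge in the process after step $m$ is uniform on $Y_m \cup Z_m$; if it lands in $Y_m$ then all of $Z_m$ is already covered before it sees any new edge, so $\tau_1$ has passed and that vertex is stranded in $B_{\tau_1}$, whatever Builder did. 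With $\delta = \eps/(50C)$ and $C = \eps^{-2}$ this yields the bound $\eps^3/100$.
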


\noindent \textbf{Notation.}
Throughout \(n\) is assumed to be sufficiently large, and this is the only assumption made in all instances of asymptotic notation. 
We set
\(V=[n]\) and \(N= \binom{n}{2}\).
We use
\(e_1,\hdots, e_N\) to denote a uniformly random permutation of the edges of the complete graph on \(V\), so \(e_i\) is the edge at the \(i\)-th step of the random graph process. 
We denote by \(\rgp_{n,i}\) the graph at the \(i\)-th step of the random graph process,  i.e.\  the graph on \(V\) with edges \(\{e_1,\hdots, e_i\}\). 
The minimum degree of a graph \(G\) is  \(\delta(G)\) and the maximum degree is \(\Delta(G)\).

\section{Preliminaries}

The next theorem, which estimates the number of degree \(d\) vertices in \(\rgp_{n,Cn}\), follows from a straightforward application of the second moment method, see \cite[Theorem 3.3]{frieze-karonksi}.
It is also a simple consequence of a more general result of Bollob{\'a}s~\cite{bollobas1982vertices}.
\begin{theorem}\label{thm:vts-small-deg}
Let \(d\) be a fixed positive integer and
\(C , \delta\) be positive constants.
Let \(X_d\) be the number of vertices of degree \(d\) in \(\rgp_{n,Cn}\), and let \(\mu_d = \frac{C^d \e^{-C}}{d!}\cdot n\).
Then, with high probability,
\(\abs{X_d - \mu_d} \le \delta \mu_d\).
\end{theorem}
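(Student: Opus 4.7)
The plan is to apply the second moment method, exploiting that for fixed $d$ and $C$ the degree of a single vertex in $\rgp_{n,Cn}$ is asymptotically Poisson-distributed, and that the degrees of two distinct vertices are nearly independent.

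First I would compute $\expect{X_d}$ via linearity of expectation, writing $\expect{X_d} = n \cdot \prob{\deg_{\rgp_{n,Cn}}(v) = d}$ for any vertex $v$. The number of edges of $\rgp_{n,Cn}$ incident to $v$ is hypergeometrically distributed, since $\rgp_{n,Cn}$ is a uniformly chosen $Cn$-edge subset of the $\binom{n}{2}$ edges of $K_n$, of which exactly $n-1$ are incident to $v$. A direct estimate using Stirling's formula, valid for fixed $d$ and $m = Cn$ linear in $n$, shows that $\prob{\deg(v) = d} = (1+o(1)) \mu_d / n$, so $\expect{X_d} = (1+o(1))\mu_d$.

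Next I would bound the variance via the decomposition
\[
\expect{X_d^2} = \expect{X_d} + n(n-1) \cdot \prob{\deg(u) = d \text{ and } \deg(v) = d}
\]
for distinct $u \ne v$. I would estimate the joint probability by splitting according to whether $uv \in \rgp_{n,Cn}$, and applying Stirling to the resulting hypergeometric expressions (partitioning $E(K_n)$ into $\{uv\}$, the other edges at $u$, the other edges at $v$, and the edges disjoint from $\{u,v\}$). In either case the dependence between the degrees of $u$ and $v$ amounts to an $O(1/n)$ correction, so the joint probability equals $(1+o(1))$ times the product of the two marginals. This gives $\Var{X_d} = o(\mu_d^2) + O(\mu_d)$, which is $o(\mu_d^2)$ since $\mu_d = \Theta(n)$.

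Finally, Chebyshev's inequality yields $\prob{|X_d - \expect{X_d}| \ge \delta\mu_d/2} \to 0$, and combining with $\expect{X_d} = (1+o(1))\mu_d$ gives $|X_d - \mu_d| \le \delta \mu_d$ with high probability. The main obstacle is the joint probability estimate, but it is routine because $d$ is fixed: the ratio of the joint hypergeometric to the product of marginals reduces, via Stirling, to $1 + O(1/n)$, and the conditioning on the status of $uv$ contributes only a negligible correction.
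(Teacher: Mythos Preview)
Your proposal is correct and matches the paper's approach exactly: the paper does not give a detailed argument but simply states that the result ``follows from a straightforward application of the second moment method'' (citing \cite[Theorem~3.3]{frieze-karonksi} and Bollob\'as~\cite{bollobas1982vertices}), which is precisely the expectation--variance--Chebyshev scheme you outline. The hypergeometric expectation computation, the $1+O(1/n)$ near-independence of the degrees of a fixed pair, and the resulting $\Var{X_d}=o(\mu_d^2)$ are the standard steps in that argument.
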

Next we state two concentration inequalities: Chernoff's bound and McDiarmid's inequality.
\begin{theorem}[{Chernoff's bound, see~\cite[eq.\ (2.5, 2.6) and Theorem 2.8]{jlr}}] \label{thm:Chernoff} 
    Let \(X\) be the sum of mutually independent indicator random variables and write 
    \(\mu = \expect{X}\).
    Then for any \(0<t\le \mu\),
    \[
    \prob{\abs{X- \mu} \ge t} \le 2\e^{-\frac{t^2}{3\mu}}.
    \]
\end{theorem}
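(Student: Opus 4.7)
The statement is the classical Chernoff bound for sums of independent indicators, so the plan is to reproduce the standard Cram\'er--Chernoff argument (even though in the paper itself this is quoted from a reference and no proof is required).

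The core idea is to control the moment generating function of $X$ and then apply Markov's inequality. First I would note that for any real $\lambda$, since the $X_i$ are independent indicators with $\Ex[X_i]=p_i$, one has $\Ex[\e^{\lambda X_i}] = 1 + p_i(\e^\lambda-1) \le \exp\!\bigl(p_i(\e^\lambda-1)\bigr)$, using the elementary inequality $1+y\le \e^y$. Multiplying over $i$, independence gives $\Ex[\e^{\lambda X}] \le \exp\!\bigl(\mu(\e^\lambda-1)\bigr)$. Applying Markov to $\e^{\lambda X}$ for $\lambda>0$ yields
\[
\prob{X\ge \mu + t} \le \exp\!\bigl(\mu(\e^\lambda-1)-\lambda(\mu+t)\bigr),
\]
and optimising over $\lambda>0$ (the minimiser is $\lambda=\log(1+t/\mu)$) produces the Bennett-type bound $\prob{X\ge \mu+t}\le \exp(-\mu\, \varphi(t/\mu))$, where $\varphi(x)=(1+x)\log(1+x)-x$. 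The lower-tail bound $\prob{X\le \mu-t}\le \exp(-\mu\, \psi(t/\mu))$ with $\psi(x)=(1-x)\log(1-x)+x$ is obtained in exactly the same way, applying the transform now with $\lambda<0$ (or equivalently to $-X$).

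The only remaining task is to compare $\varphi$ and $\psi$ with the Gaussian quantity $x^2/3$ on the relevant range. For $0<x\le 1$ I would expand $\varphi(x) = \sum_{j\ge 2}(-1)^j x^j/(j(j-1))$ and bound the tail to show $\varphi(x)\ge x^2/3$; the analogous (and slightly easier) estimate $\psi(x)\ge x^2/2\ge x^2/3$ follows from $\psi''(x)=1/(1-x)\ge 1$. Combining the two tail estimates by a union bound produces the advertised factor of $2$ and the exponent $-t^2/(3\mu)$. The hypothesis $t\le \mu$ is used precisely to keep $t/\mu$ inside $(0,1]$, which is where the quadratic lower bound on $\varphi$ is valid; outside this range one would need the sharper $\varphi(x)\sim x\log x$ behaviour instead.

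The only genuinely non-mechanical step is the convex-analytic inequality $\varphi(x)\ge x^2/3$ on $(0,1]$; everything else is either Markov's inequality, independence, or optimisation of an explicit one-variable function. Since the statement is standard and is merely quoted from Janson--{\L}uczak--Ruci\'nski in the paper, in practice one would simply cite~\cite{jlr} rather than reproduce the argument.
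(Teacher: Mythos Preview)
Your proposal is correct; it is the standard Cram\'er--Chernoff derivation and all the steps check out, including the alternating-series bound $\varphi(x)\ge x^2/2 - x^3/6 \ge x^2/3$ on $(0,1]$ and the convexity argument for $\psi$. The paper itself gives no proof at all, simply citing Janson--{\L}uczak--Ruci\'nski, so there is nothing to compare against beyond what you already observed.
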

\begin{theorem}[McDiarmid's inequality] \label{thm:mcdiarmid}
	\renewcommand{\prob}[1]{\mathbf{P}\big[#1\big]}
	Let \(X_1,\hdots,X_m\) be independent random variables with \(X_i\) taking values in a set \(S_i\).
	Let \(f: \prod_{i\in [m]} S_i \rightarrow \R\) be a function such that
	for any \(\xb, \xb\prm \in \prod_{i\in [m]} S_i\) differing only at the \(k^{\text{th}}\) coordinate we have
	\[
		\abs{f(\xb) - f(\xb\prm)} \le c_k,
	\]
	for some \(c_k \in \R\).
	Then, for every $t > 0$,
	\[
		\prob{\,\abs{f(X_1,\hdots,X_m)-\expect{f(X_1,\hdots,X_m)}} > t\,}
		\le 2\exp\left( -\frac{2t^2}{\sum_{k=1}^m c_k^2} \right).
	\]
\end{theorem}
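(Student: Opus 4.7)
The plan is to reduce the failure claim to a lower-tail bound on the number of ``Type~2'' purchases and then exploit the structure of the random-permutation process.

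For any strategy $S$ and any realization, classify each purchased edge at its moment of purchase: it is \emph{Type~2} if both of its endpoints are uncovered by Builder, \emph{Type~1} if exactly one is, and \emph{Type~0} if neither. Let $T_0, T_1, T_2$ denote the counts at time $\tau_1$. If Builder covers all $n$ vertices with at most $m=(3/4-\eps)n$ purchases, then $T_1+2T_2=n$ and $T_0+T_1+T_2\le m$ combine to give $T_2 \ge n-m \ge (1/4+\eps)n$. Therefore it suffices to show that for every $(\tau_1,(3/4-\eps)n)$-strategy $S$,
\[ \prob{T_2(S) < (1/4+\eps)n} \ge \eps^3/100. \]

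Next, I would analyze $T_2(S)$ structurally. Since Type~2 edges are vertex-disjoint (each endpoint of such an edge carried no prior incident purchase), the Type~2 edges form a matching in Builder's purchased graph. I decompose $T_2(S)$ into \emph{pristine} contributions---Type~2 purchases $\{u,v\}$ at time $i$ with $t(u)=t(v)=i$, where both endpoints are first seen in the random process at the moment of purchase---and \emph{earned} contributions, where Builder had previously skipped a first-edge of $u$ or $v$. Pristine edges form a subset of the random-graph set $D=\{e_i: t(u)=t(v)=i \text{ for } e_i=\{u,v\}\}$. A direct second-moment computation on the uniform random permutation, mirroring the calculation that yields the Cooper--Frieze value $o_1=3/4$ (Builder's greedy strategy purchases $n-|D| \approx 3n/4$ edges), gives $\expect{|D|} = (1/4+o(1))n$ and $\Var{|D|}=\Theta(n)$. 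An elementary central-limit or Chebyshev estimate then yields $\prob{|D|\le (1/4+\eps/2)n}\ge c(\eps)>0$ for each fixed $\eps\in(0,1/2)$.

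To transfer the tail bound from $|D|$ to $T_2(S)$, I would control the earned contribution via a potential $\Phi_i = |U_i|-|I_i|$ measuring how much Builder's uncovered set $U_i$ exceeds the random-process isolated set $I_i$ at step $i$. Each earned Type~2 edge corresponds to a unit of ``extra'' potential Builder has generated by skipping a prior first-edge incident to one of its endpoints, and the total such potential is bounded by the number of skipped first-edges, which in turn is bounded by how far Builder's budget $(3/4-\eps)n$ falls short of the greedy count $n-|D|$. A careful amortization, combined with concentration via McDiarmid's inequality (\Cref{thm:mcdiarmid}) applied to transpositions of the permutation, gives that the earned contribution is at most $\eps n/2$ with probability at least $1-\eps^3/200$. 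A union bound then yields $\prob{T_2(S)<(1/4+\eps)n}\ge \eps^3/100$, as required.

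The main obstacle is controlling the earned Type~2 edges: a clever Builder may skip many first-edges strategically to create later Type~2 opportunities, so the amortization must carefully charge each gain against the budget. I expect this requires tracking the joint evolution of $(|U_i|,|I_i|,m_i)$ along the execution and showing that the discrepancy $\Phi_i$ stays $o(n)$ throughout any strategy's run with overwhelming probability---this is where McDiarmid-type concentration on the underlying permutation and the Poisson-type estimates from \Cref{thm:vts-small-deg} at a grid of intermediate times play their roles in making the earned-contribution bound strategy-independent.
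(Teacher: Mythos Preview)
Your proposal is not a proof of the stated theorem. The statement in question is McDiarmid's inequality itself --- a standard concentration inequality that the paper cites as a known preliminary result and does not prove. What you have written is instead an attempt to prove \Cref{thm:k-1:any-fails} (that any \((\tau_1,(3/4-\eps)n)\)-strategy fails with probability at least \(\eps^3/100\)). You have confused which statement you were asked to address; indeed, your argument even \emph{invokes} McDiarmid's inequality as a tool, which would be circular if this were meant to be a proof of it.

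If the intent was to prove McDiarmid's inequality, the standard route is the Azuma--Hoeffding martingale argument applied to the Doob martingale \(M_k = \expect{f(X_1,\ldots,X_m)\mid X_1,\ldots,X_k}\), using the bounded-differences hypothesis to show \(|M_k - M_{k-1}|\le c_k\). None of that appears in your write-up.

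If instead you meant to prove \Cref{thm:k-1:any-fails}, note that the paper's approach is quite different from yours: it shows (\Cref{claim:close-to-o1}) that if \(|Y_i|\le \delta n\) throughout the first \(Cn\) steps then Builder must have purchased nearly \(o_1 n\) edges, and then uses \Cref{lemma:success-prob-formula} to convert a moment with \(|Y_i|\ge \delta n\) into a constant failure probability. Your Type-0/1/2 decomposition and the ``earned contribution'' potential argument are an interesting alternative framing, but the key step --- bounding the earned Type~2 edges uniformly over all strategies via amortization --- is only sketched and is exactly where the difficulty lies; you have not given the charging argument that makes it work.
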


We will use at several points the following straightforward consequence of Chernoff's bound and the union bound.
\begin{proposition} \label{prop:max-deg-bound}
    With high probability, the maximum degree of \(\rgp_{n,m}\) with \(m=O(n)\) is at most \(10 \log n\).
\end{proposition}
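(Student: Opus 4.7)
The plan is to bound the probability that any fixed vertex has degree exceeding $10\log n$ by $o(1/n)$ and then take a union bound over the $n$ vertices. Fix $v \in V$, set $k = \lceil 10 \log n \rceil$, and let $C>0$ be a constant with $m \le Cn$.

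The key estimate is a simple moment calculation. If $d_{\rgp_{n,m}}(v) \ge k$, then $\binom{d_{\rgp_{n,m}}(v)}{k} \ge 1$, hence
\[
\prob{d_{\rgp_{n,m}}(v) \ge k} \le \expect{\binom{d_{\rgp_{n,m}}(v)}{k}} = \binom{n-1}{k} \cdot \frac{\binom{N-k}{m-k}}{\binom{N}{m}},
\]
where the equality follows by linearity of expectation summed over $k$-element subsets $S \subseteq V \setminus \{v\}$: the probability that all $k$ edges from $v$ into $S$ belong to the random edge set equals $\binom{N-k}{m-k}/\binom{N}{m}$. This ratio is at most $(m/N)^k \le (2C/(n-1))^k$, and combined with $\binom{n-1}{k} \le (e(n-1)/k)^k$ it yields
\[
\prob{d_{\rgp_{n,m}}(v) \ge k} \le \left(\frac{2eC}{k}\right)^k.
\]
With $k = \lceil 10 \log n \rceil$ the right-hand side is $n^{-\omega(1)}$, hence $o(1/n)$, and a union bound over the $n$ choices of $v$ finishes the proof.

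An equivalent route is to observe that the degree of $v$ in $\rgp_{n,m}$ is stochastically dominated by a Binomial$(n-1, 2C/n)$ random variable (whose mean is a constant), and then invoke the multiplicative form of Chernoff's upper tail. The statement is genuinely routine, so there is no serious obstacle. The only minor subtlety worth flagging is that a $10\log n$ deviation lies far outside the small-deviation regime $0 < t \le \mu$ covered by the Chernoff bound stated in Theorem 2.2; one therefore either performs the direct counting above or appeals to the standard multiplicative large-deviation Chernoff inequality.
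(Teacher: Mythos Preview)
Your proof is correct and follows the same outline the paper indicates: bound the upper tail of a single vertex's degree and take a union bound over the $n$ vertices. The paper does not spell out any details, merely citing Chernoff's bound plus a union bound; your observation that the displayed Chernoff inequality (valid only for $0<t\le\mu$) does not directly cover a deviation of order $\log n$ from a constant mean is well taken --- one needs the multiplicative large-deviation form contained in the same JLR reference. Your direct factorial-moment computation $\prob{d(v)\ge k}\le\expect{\binom{d(v)}{k}}$ sidesteps this issue entirely and is a clean, self-contained alternative that yields the required $n^{-\omega(1)}$ bound without appealing to any black-box inequality.
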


For a modern treatment of the next two theorems see chapter 4 of \cite{frieze-karonksi}.
The first one determines when, with high probability, the random graph process starts having minimum degree at least \(k\).
\begin{theorem}[Erd\H{o}s and Renyi~\cite{erdHos1961strength}] \label{thm:min-deg-k}
    Let \(m=\frac{n}{2} \left(\log n + (k-1)\log \log n + f(n) \right)\).
    \[ 
		\prob{\delta(\rgp_{n,m}) \ge k} 
		= \begin{cases}
			1-o(1) &\text{ if \(f(n) \tendsto \infty\)}\\
			o(1) &\text{ if \(f(n) \tendsto -\infty\).}
		\end{cases}
	\]
\end{theorem}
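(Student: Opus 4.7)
The plan is a first/second moment argument on \(X_{k-1}\), the number of vertices of degree exactly \(k-1\) in \(\rgp_{n,m}\). First I would pass from \(\rgp_{n,m}\) to the binomial model \(\gnp{n}{p}\) with \(p = 2m/(n(n-1))\), so that \(np = \log n + (k-1)\log\log n + f(n) + o(1)\) and the edge indicators are independent; the standard equivalence of the two models for properties of this flavour transfers the conclusion back to \(\rgp_{n,m}\).

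Using \(d(v)\sim \text{Bin}(n-1,p)\), a short calculation based on \((1-p)^{n-1-j} = (1+o(1))\, e^{-np}\) gives, for each fixed \(0\le j \le k-1\),
\[
\expect{X_j} = (1+o(1))\, \frac{n (np)^j\, e^{-np}}{j!} = (1+o(1))\, \frac{(\log n)^{j-(k-1)}}{j!}\, e^{-f(n)},
\]
since \(n\, e^{-np} = (1+o(1))(\log n)^{-(k-1)} e^{-f(n)}\). If \(f(n)\tendsto \infty\) then every \(\expect{X_j}\) with \(0 \le j \le k-1\) tends to \(0\), and Markov together with a union bound yield
\[
\prob{\delta(\rgp_{n,m})<k} \le \sum_{j=0}^{k-1} \expect{X_j} = o(1).
\]

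If \(f(n)\tendsto -\infty\) then \(\expect{X_{k-1}} \tendsto \infty\), and I would apply Chebyshev's inequality after proving \(\expect{X_{k-1}^2} = (1+o(1))\expect{X_{k-1}}^2\). Writing \(X_{k-1}=\sum_v I_v\) with \(I_v = \mathbf{1}\{d(v)=k-1\}\), for distinct \(u,v\) one splits on whether \(uv\in E\): conditional on this indicator, the numbers of neighbours of \(u\) and \(v\) outside \(\{u,v\}\) are independent \(\text{Bin}(n-2,p)\) variables. Since \(np=\Theta(\log n)\), the contribution of the case \(uv\in E\) is a lower-order correction, while the case \(uv\notin E\) contributes \((1+o(1))\expect{I_u}\expect{I_v}\). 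Summing over ordered pairs gives \(\Var(X_{k-1})=o(\expect{X_{k-1}}^2)\), and Chebyshev yields \(\prob{X_{k-1}=0}=o(1)\), i.e.\ \(\prob{\delta(\rgp_{n,m})\ge k}=o(1)\). The only genuinely delicate step is the second moment estimate, where the \((1+o(1))\) factors in the local binomial probabilities must be tracked carefully; the remaining steps reduce to Stirling-type approximations and the standard \(\rgp_{n,m}\leftrightarrow\gnp{n}{p}\) transfer.
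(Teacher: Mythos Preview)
The paper does not prove this theorem; it is quoted as a classical result of Erd\H{o}s and R\'enyi, with a pointer to Chapter~4 of Frieze--Karo\'nski for a modern treatment. Your sketch is precisely the standard first/second moment proof one finds there, and it is correct.

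One small remark: your asymptotic $\expect{X_j} = (1+o(1))\frac{(\log n)^{j-(k-1)}}{j!}e^{-f(n)}$ tacitly assumes $|f(n)| = o(\log n)$ so that $np \sim \log n$. Outside that range the conclusion follows at once from monotonicity of the event $\{\delta(\rgp_{n,m})\ge k\}$ in $m$, so there is no real gap, but it is worth stating explicitly. The $\rgp_{n,m}\leftrightarrow\gnp{n}{p}$ transfer for this monotone property is indeed routine.
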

The next one shows that, with high probability, once a random graph has minimum degree at least 1, it is also connected.
\begin{theorem}[Bollob{\'a}s and Thomason~\cite{bollobas-thomason}]\label{thm:con-min-deg}
    In the random graph process,
    with high probability
    the hitting time for connectivity is the same as for minimum degree \(1\).
\end{theorem}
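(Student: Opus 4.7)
The plan has two parts corresponding to the two inequalities. The direction $\tau_{\connectivity} \ge \tau_1$ holds deterministically, since for $n \ge 2$ an isolated vertex prevents connectivity. The real task is therefore to show $\tau_{\connectivity} \le \tau_1$ with high probability, i.e.\ that $\rgp_{n, \tau_1}$ is connected. By \Cref{thm:min-deg-k} applied with $k=1$ and $f(n) = \pm\log\log n$, with high probability $\tau_1$ lies in the window $I_n := [m_-, m_+]$ where $m_\pm = \frac{n}{2}(\log n \pm \log\log n)$. It therefore suffices to prove that with high probability, for every $m \in I_n$ the graph $\rgp_{n,m}$ has at most one connected component of size at least $2$; together with the definition of $\tau_1$ (no isolated vertices) this would force $\rgp_{n, \tau_1}$ to be a single component on all of $V$.

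First, I would establish this structural property at the left endpoint $m_-$ by a first moment calculation, most conveniently performed in the coupled $G(n,p)$ model with $p = (\log n - \log\log n)/n$. For $2 \le k \le n/2$, the expected number of components on exactly $k$ vertices is bounded, using Cayley's formula for tree components plus a cheap estimate (geometric in the excess number of edges) for denser connected graphs on $k$ vertices, by a constant times
\[
  \binom{n}{k}\, k^{k-2}\, p^{k-1}\, (1-p)^{k(n-k)}.
\]
The factor $(1-p)^{k(n-k)} \approx n^{-k}\e^{O(k \log\log n)}$ overwhelms $\binom{n}{k}$; after summing over $k$ the total is dominated by the $k=2$ term and is $O(\poly(\log n)/n) = o(1)$. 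Hence with high probability $\rgp_{n, m_-}$ consists of a unique ``giant'' component on more than $n/2$ vertices together with some isolated vertices, whose number is $O(\log n)$ by \Cref{thm:vts-small-deg} applied with $d=0$.

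The main obstacle, and the step where care is needed, is to transfer this snapshot at $m_-$ to every $m \in I_n$; since the property ``no component of size in $[2, n/2]$'' is not monotone in $m$, one cannot simply union bound time-by-time using the first moment alone. I would argue by sprinkling along the process: additional edges can only merge existing components, so the giant remains the unique component of size exceeding $n/2$, and a new component of size at least $2$ can be born \emph{only} from an edge joining two currently isolated vertices. Conditionally on the history, the next edge is uniform over the $N - m = \Theta(n^2)$ remaining edges, so the probability it links two isolated vertices is at most $\binom{I_m}{2}/(N-m) = O(I_m^2/n^2)$, where $I_m$ is the current number of isolated vertices. As long as no such ``bad'' edge has appeared, $I_m$ is non-increasing in $m$ and hence remains $O(\log n)$ throughout the window. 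A union bound over the $|I_n| = O(n \log\log n)$ steps then bounds the total failure probability by $O(\log^2 n \cdot \log\log n / n) = o(1)$. Combining, $\rgp_{n, \tau_1}$ is a single component with high probability, as required.
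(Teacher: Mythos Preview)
The paper does not give its own proof of this statement: \Cref{thm:con-min-deg} is quoted as a classical result of Bollob\'as and Thomason, with a pointer to a textbook treatment, and it is not invoked anywhere in the subsequent arguments. So there is no paper proof to compare your proposal against.

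That said, your sketch is a sound outline of the standard two-step argument (structure at $m_-$ via first moment, then sprinkling through the window), and the sprinkling step is handled correctly: you bound the probability that the \emph{first} bad edge occurs at some step in $I_n$, and on the complementary event the ``giant plus isolated vertices'' structure persists to $\tau_1$. One genuine slip: you appeal to \Cref{thm:vts-small-deg} with $d=0$ to bound the number of isolated vertices at $m_-$, but that theorem is stated for fixed \emph{positive} $d$ and, more importantly, for $m=Cn$ with $C$ a constant, whereas here $m_- = \Theta(n\log n)$. The claim you need (that $\rgp_{n,m_-}$ has $O(\poly\log n)$ isolated vertices with high probability) is of course true and follows from a direct first/second moment computation, or even just Markov applied to the expectation $n(1-p)^{n-1}\approx \log n$; you should replace the citation with that one-line estimate. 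Your sprinkling bound only uses $I_{m_-}^2 \cdot |I_n| / n^2 = o(1)$, so even the crude $I_{m_-}\le (\log n)^2$ from Markov suffices.
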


For two real random variables \(X,Y\), we say \(X\) \emph{stochastically dominates} \(Y\) if for all \(t \in \reals\)
\(
\prob{X\ge t} \ge \prob{Y\ge t}
\).
The next lemma is standard, see e.g.\ Section 23.9 of~\cite{frieze-karonksi} for a proof.
\begin{lemma}\label{lemma:stoc-dom-sum}
    Let \(Y_1,\hdots, Y_n\) be arbitrary real random variables, and let \(X_1,\hdots, X_n\) be mutually independent real random variables.
    
    Suppose that for all \(i\in [n]\) and  \(a_1,\hdots, a_{i-1} \in \reals\),
     \(Y_i\) conditioned on \(Y_1 = a_1,\hdots, Y_{i-1} = a_{i-1}\)
     stochastically dominates
     \(X_i\).
    Then \(\sum_{i=1}^n Y_i\)
    stochastically dominates
     \(\sum_{i=1}^n X_i\).

    Suppose instead that for all \(i \in [n]\) and  \(a_1,\hdots, a_{i-1} \in \reals\),
    \(Y_i\) conditioned on \(Y_1=a_1,\hdots, Y_{i-1} = a_{i-1}\) is stochastically dominated by \(X_i\). Then \(\sum_{i=1}^n X_i\) stochastically dominates \(\sum_{i=1}^n Y_i\).
\end{lemma}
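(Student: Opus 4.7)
The plan is to construct a coupling on a common probability space that realises random vectors $(\tilde Y_1,\hdots,\tilde Y_n)$ and $(\tilde X_1,\hdots,\tilde X_n)$ with the prescribed joint laws and satisfying $\tilde Y_i \ge \tilde X_i$ pointwise for every $i$. Once this is achieved the conclusion is immediate: $\sum_i \tilde Y_i \ge \sum_i \tilde X_i$ almost surely, so for any $t\in\reals$,
\[
\prob{\,\textstyle\sum_i Y_i \ge t\,} \;=\; \prob{\,\textstyle\sum_i \tilde Y_i \ge t\,} \;\ge\; \prob{\,\textstyle\sum_i \tilde X_i \ge t\,} \;=\; \prob{\,\textstyle\sum_i X_i \ge t\,}.
\]

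For the coupling I use the standard quantile (inverse CDF) trick. Let $U_1,\hdots,U_n$ be mutually independent $\text{Uniform}[0,1]$ variables, and write $F^{-1}(u) := \inf\{t : F(t)\ge u\}$ for the generalised inverse. Define recursively
\[
\tilde X_i \;=\; F_{X_i}^{-1}(U_i), \qquad \tilde Y_i \;=\; F_{Y_i \mid \tilde Y_1,\hdots,\tilde Y_{i-1}}^{-1}(U_i),
\]
where the conditional CDF in the second definition is read off from the joint law of $(Y_1,\hdots,Y_n)$ after conditioning on $Y_j = \tilde Y_j$ for $j<i$.

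Three points need to be checked. First, an induction on $i$ using the basic fact ``$F^{-1}(U)$ has CDF $F$'' applied to each conditional distribution shows that $(\tilde Y_1,\hdots,\tilde Y_n)$ has the same joint law as $(Y_1,\hdots,Y_n)$. Second, $(\tilde X_1,\hdots,\tilde X_n)$ has the same joint law as $(X_1,\hdots,X_n)$: each $\tilde X_i$ is a deterministic function of the single variable $U_i$, and the $U_i$ are mutually independent, so the $\tilde X_i$ are independent with the correct marginals. Third, the domination hypothesis $\prob{Y_i \ge t \mid Y_1{=}a_1,\hdots,Y_{i-1}{=}a_{i-1}} \ge \prob{X_i \ge t}$ translates into $F_{Y_i\mid a_1,\hdots,a_{i-1}}(t) \le F_{X_i}(t)$ for all $t$ and all valid histories, hence $F_{Y_i\mid\cdot}^{-1}(u) \ge F_{X_i}^{-1}(u)$ for every $u\in(0,1)$; evaluating at $u=U_i$ yields $\tilde Y_i \ge \tilde X_i$ almost surely.

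The only genuinely delicate point is measurability of the conditional CDF in its conditioning variables when $Y_1,\hdots,Y_n$ are not discrete; this is handled via regular conditional distributions, which exist here because all random variables take values in $\reals$. The second assertion of the lemma follows by applying the first to $-Y_1,\hdots,-Y_n$ and $-X_1,\hdots,-X_n$, since negation reverses stochastic domination and turns sums into sums.
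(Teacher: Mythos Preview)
Your proof is correct and follows the standard quantile-coupling argument. The paper does not actually prove this lemma: it states that the result is standard and refers the reader to Section~23.9 of Frieze and Karo\'nski for a proof. The argument found there is essentially the one you give --- build a coupling via generalised inverses of (conditional) distribution functions evaluated at common uniform variables, so that the pointwise CDF inequality forces $\tilde Y_i \ge \tilde X_i$ almost surely --- so your write-up is in line with the intended approach. Your remark about needing regular conditional distributions to make the conditional CDF jointly measurable is the right caveat for the general real-valued case; in the paper's applications the $Y_i$ are indicator variables, so this technicality does not arise.
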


The next lemma gives a simple \((O(n), \frac{k+o(1)}{2}n)\)-strategy that produces, with high probability, a graph in which almost all vertices have degree at least \(k\).
It is due to Frieze, Krivelevich and Michaeli~\cite{budget}.
For completeness, we give a detailed proof.
We remark that here and throughout the paper we use the convention that \(B_i\) is
Builder's graph just before the edge \(e_{i+1}\) is revealed.
\begin{lemma}[Lemma 2.15,~\cite{budget}]\label{lemma:very-greedy}
    Let \(k\ge 1\) be an integer, \(\eps\in (0,1)\) a constant and set \(C=k\eps^{-2}\).
    Suppose that for all \(i\in [Cn]\), if both ends of \(e_i\) have degree less than \(k\) in \(B_{i-1}\), then Builder purchases $e_i$.
    Then, with high probability, \(B_{Cn}\) has at most \(\eps n\) vertices of degree at most \(k-1\).
\end{lemma}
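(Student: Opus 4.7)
The plan is first to show that $W := \{v \in V : \deg_{B_{Cn}}(v) < k\}$ is necessarily sparse inside $\rgp_{n,Cn}$, and then to argue by a union bound that no large sparse set exists with high probability. Since Builder only adds edges, the sets $W_0 \supseteq W_1 \supseteq \cdots$ form a decreasing chain; in particular, if $u,v \in W$ then $u,v \in W_{j-1}$ for every $j \in [Cn]$, so the hypothesis on Builder's strategy forces every edge of $\rgp_{n,Cn}$ with both endpoints in $W$ to lie in $B_{Cn}$. Consequently $B_{Cn}[W] = \rgp_{n,Cn}[W]$, and since each $v \in W$ has degree at most $k-1$ in $B_{Cn}$, every vertex of $\rgp_{n,Cn}[W]$ has degree at most $k-1$, giving
\[
e(\rgp_{n,Cn}[W]) \le \tfrac{k-1}{2}\,|W|.
\]

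It therefore suffices to prove that with high probability no $S \subseteq V$ with $|S| \ge \eps n$ satisfies $e(\rgp_{n,Cn}[S]) \le (k-1)|S|/2$. For fixed $S$ of size $w$, the variable $X_S := e(\rgp_{n,Cn}[S])$ is hypergeometric with mean $\mu_w = Cn\binom{w}{2}/\binom{n}{2} = (1+o(1))\cdot kw^2/(\eps^2 n)$. For $w \ge \eps n$ the threshold $(k-1)w/2$ equals $\alpha\mu_w$ for some $\alpha \le (k-1)\eps/(2k) \le 1/2$, so a sharp lower-tail Chernoff bound yields $\prob{X_S \le (k-1)w/2} \le \exp(-c\mu_w)$ for a positive constant $c = c(k,\eps)$. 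Summing over all such $S$:
\[
\prob{\exists S \subseteq V : |S| \ge \eps n,\; e(\rgp_{n,Cn}[S]) \le \tfrac{k-1}{2}|S|} \le \sum_{w \ge \eps n}\binom{n}{w}\exp(-c\mu_w) = o(1),
\]
using $\binom{n}{w} \le \exp(nH(w/n))$ (with $H$ the binary entropy) and the fact that $c\mu_w \ge (ck/\eps)w$ grows linearly and dominates the entropy term for $w \ge \eps n$.

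The main obstacle is selecting a Chernoff bound strong enough to beat $\binom{n}{w}$ uniformly in $w$. The bound in \Cref{thm:Chernoff} yields only $\exp(-\mu_w/12)$, which becomes too weak when $\eps$ is around $1/2$ and $k$ is small, since the exponent then competes with $H(1/2)n = (\log 2)n$. I would therefore invoke the sharp form $\prob{X \le \alpha \mu} \le \exp\bigl(\mu(\alpha - 1 - \alpha\log\alpha)\bigr)$. Writing $w = sn$ and setting $\alpha(s) = (k-1)\eps^2/(2ks)$ and $D(\alpha) := 1 - \alpha + \alpha\log\alpha$, the function $\alpha(s)$ is decreasing in $s$ while $D$ is decreasing in $\alpha$, so the rate $(ks^2/\eps^2)D(\alpha(s))$ is minimised at $s = \eps$. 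The critical inequality thus reduces to $k\cdot D\bigl((k-1)\eps/(2k)\bigr) > H(\eps)$, which holds for every integer $k \ge 1$ and every $\eps \in (0,1)$: for $k=1$ one has $D(0)=1 > H(\eps)$ since $H(\eps) \le \log 2 < 1$, and for $k \ge 2$ one verifies the inequality directly as $D((k-1)\eps/(2k))$ stays bounded away from $0$ on $(0,1)$ while $H(\eps)$ is bounded by $\log 2$.
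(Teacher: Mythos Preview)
Your approach is genuinely different from the paper's and is a natural alternative. The paper argues dynamically: it sets $U_i=\{v:\deg_{B_i}(v)<k\}$, lets $X_i$ indicate the event that either $|U_{i-1}|\le\eps n$ already or $e_i$ lands with both endpoints in $U_{i-1}$, and shows via stochastic domination (\Cref{lemma:stoc-dom-sum}) that $\sum_{i\le Cn}X_i$ beats a binomial with mean $\tfrac{2}{3}\eps^2 Cn=\tfrac{2}{3}kn$; since $kn/2$ purchases of this type already force $|U_{Cn}|=0$, the conclusion follows. Your approach is static: the terminal bad set $W$ must induce a graph of maximum degree at most $k-1$ inside $\rgp_{n,Cn}$, and a union bound rules out large sets with that property. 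Your route extracts a stronger structural fact about $W$; the paper's route sidesteps any numerical verification and uses only the crude Chernoff bound stated in the paper.

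There are two gaps in the execution. First, the reduction to $s=\eps$ is not justified as written: you show the Chernoff exponent $(ks^2/\eps^2)D(\alpha(s))$ is increasing in $s$, but $H(s)$ also increases on $(\eps,1/2)$ when $\eps<1/2$, so ``rate minimised at $\eps$'' does not by itself localise the comparison to $s=\eps$. The clean fix is monotonicity: if $|W|>\eps n$ then any subset $S\subseteq W$ of size $\lceil\eps n\rceil$ still has $\rgp_{n,Cn}[S]=B_{Cn}[S]$ of maximum degree at most $k-1$, so only $w=\lceil\eps n\rceil$ ever needs to be considered and the sum over $w$ disappears. Second, the final sentence is not a proof: ``$D$ bounded away from $0$'' together with ``$H\le\log 2$'' only gives $kD>\log 2$ once $k$ is large enough; concretely $\alpha<1/2$ yields $D(\alpha)>(1-\log 2)/2\approx 0.153$, which needs $k\ge 5$. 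For $k\in\{2,3,4\}$ you must use the sharper bound $\alpha\le(k-1)/(2k)$ and check $2D(1/4)\approx 0.807$, $3D(1/3)\approx 0.901$, $4D(3/8)\approx 1.028$, each exceeding $\log 2\ge H(\eps)$. With these two fixes the argument is complete.
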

\begin{proof}
    \newcommand{\Xbi}{\mathbf{X}_{<i}}
    \newcommand{\abi}{\mathbf{a}_{<i}}
    \renewcommand{\prob}{\mathbf{P}}
    Let
    \[
    U_i = \{v\in V : \deg_{B_i}(v) \le k-1\}
    \]
    be the set of vertices of degree at most \(k-1\) in Builder's graph right after she has seen \(e_i\) and decided whether to purchase it.
    Hence, if both ends of \(e_i\) are in $U_{i-1}$ then $e_i$ is purchased by Builder.
    Let \(X_i\) be the indicator random variable for the event that either \(\abs{U_{i-1}} \le \eps n\), or both ends of \(e_i\) are in \(U_{i-1}\).
	Let \(a_1,\hdots, a_{i-1} \in \{0,1\}\) and write \(\Xbi = \abi \) for the condition 
    \(X_1 = a_1, \hdots, X_{i-1} = a_{i-1}\).
    Then
    \begin{align*}
		&\prob \Big[X_i=1 \: \Big |\: \Xbi = \abi \Big]
		\\
		& \qquad = \prob\Big[ X_i=1 \: \Big|\:  \Xbi = \abi,\, \abs{U_{i-1}} > \eps n \Big]\: \cdot \: \prob\Big[\abs{U_{i-1}} > \eps n \: \Big|\:  \Xbi = \abi\Big]
		\\
		&\qquad \qquad +\ 
		\prob\Big[X_i=1 \: \big| \: \Xbi = \abi,\, \abs{U_i}\le   \eps n\Big] \: \cdot \: 
		\left(1- \prob\Big[\abs{U_{i-1}} > \eps n \: \Big| \: \Xbi = \abi\Big]\right)
		\\
		&\qquad \ge\, \frac{\binom{\eps n}{2}}{\binom{n}{2}}\  \prob\Big[\abs{U_{i-1}} > \eps n \: \big| \: \Xbi = \abi\Big]
		+ 
		1 \cdot \left(1- \prob\Big[\abs{U_{i-1}} > \eps n \: \big|\: \Xbi = \abi \Big]\right)
		\\
		&\qquad \ge\,  2\eps^2/3.
    \end{align*}

    In particular, if \(Y_1,\hdots, Y_{Cn}\) are independent indicator random variables with mean \(2\eps^2/3\), then \(X_i\) conditioned on \(\Xbi = \abi\) stochastically dominates \(Y_i\).
    Therefore, by \Cref{lemma:stoc-dom-sum}, 
    \(\sum_{i=1}^{Cn} X_i\)  stochastically dominates \(\sum_{i=1}^{Cn}Y_i\).
    Chernoff's bound implies that, with probability at least \(1-\e^{-\Omega(k n)}\),
    \(\sum_{i=1}^{Cn} Y_i\) is at least \(1/2 \cdot \eps^2 \cdot Cn = kn/2\), and so the same holds for $\sum_{i = 1}^{Cn}X_i$.
    Therefore, either for some \(i\in [Cn]\), \(\abs{U_i} \le \eps n\), or Builder bought at least \(kn/2\) edges with both ends of degree at most \(k-1\) in Builder's graph right before purchasing them.
    In the former case the conclusion of the lemma follows since \(\abs{U_i}\) is clearly decreasing in \(i\), and in the latter case an easy calculation shows that \(\abs{U_{Cn}} = 0\).
\end{proof}

Recall that \(O_k\) is the random graph model which consists of the first \(k\) edges incident to each vertex in the random graph process.
Moreover, recall that
Frieze and Cooper~\cite{knn-cooper-frieze} showed that there exists a constant \(o_k \) such that with high probability \(e(O_k) = (o_k +o(1))n\).
Corollary 3.2 in \cite{budget} provides estimates for \(o_k\), following~\cite{knn-cooper-frieze}. 
The following proposition essentially follows from the proof of Corollary 3.2 in \cite{budget}.
The latter is not stated in this way because the authors are interested in the regime where \(k\) is large. 

\begin{proposition}[Corollary 3.2, \cite{budget}] \label{prop:ok-bound}
    \begin{equation*}
        o_k = \frac{k}{2} + \frac{1}{4}\sum_{i=0}^{k-1} \binom{2i}{i} 2^{-2i}
    \end{equation*}
\end{proposition}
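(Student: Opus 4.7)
The plan is to compute $\expect{e(O_k)}$ exactly via a Poisson approximation, simplify the resulting integral through a combinatorial identity, and then invoke concentration. For each vertex $v$, let $N_k(v)$ denote the first $k$ edges incident to $v$ in the random graph process, so that $O_k = \bigcup_v N_k(v)$, and let $D_k = \{uv : uv \in N_k(u) \cap N_k(v)\}$ be the set of edges that lie in the first $k$ at \emph{both} endpoints. Provided every vertex of $\rgp_{n,N}$ has degree at least $k$ (a trivial event), $|N_k(v)| = k$ for every $v$, and a clean double count gives
\[
    e(O_k) \;=\; \sum_v |N_k(v)| - |D_k| \;=\; kn - |D_k|,
\]
so it is enough to determine the asymptotic value of $\expect{|D_k|}/n$ and establish concentration.

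For the expectation, I would condition on the position $m$ of a fixed edge $uv$ in the random permutation. The event $uv \in N_k(u)$ becomes ``fewer than $k$ of the $n-2$ other edges at $u$ occupy positions $1,\dots,m-1$'', a hypergeometric count with mean $\approx 2m/n$, and similarly for $v$. Since the edge sets $\{ux : x \ne u,v\}$ and $\{vy : y \ne u,v\}$ are disjoint, the two hypergeometric counts are asymptotically independent $\Po{\lambda}$ variables with common rate $\lambda = 2m/n$. Writing $F_k(\lambda) = \prob{\Po{\lambda} < k} = \e^{-\lambda}\sum_{i=0}^{k-1}\lambda^i/i!$ and substituting $\lambda = (n-1)m/N$, this yields
\[
    \expect{|D_k|} \;=\; (1+o(1))\binom{n}{2}\int_0^1 F_k((n-1)t)^2\,dt \;=\; (1+o(1))\,\frac{n}{2}\int_0^\infty F_k(\lambda)^2\,d\lambda,
\]
where the upper limit can be sent to $\infty$ at negligible cost since $F_k$ decays exponentially.

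Expanding $F_k(\lambda)^2$ and applying $\int_0^\infty \lambda^{i+j}\e^{-2\lambda}\,d\lambda = (i+j)!/2^{i+j+1}$ gives
\[
    \int_0^\infty F_k(\lambda)^2\,d\lambda \;=\; \tfrac{1}{2}\sum_{i=0}^{k-1}\sum_{j=0}^{k-1}\binom{i+j}{i}2^{-(i+j)} \;=:\; \tfrac{1}{2}\,S_k.
\]
To evaluate $S_k$ in closed form I would use the identity $\sum_{i=0}^{k-1}\binom{k-1+i}{i}2^{-(k-1+i)} = 1$, which is exactly the statement that in a best-of-$(2k-1)$ series of fair coin flips some side must win. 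Splitting $S_k - S_{k-1}$ into the $i=k-1$ row and $j=k-1$ column of the double sum (with the diagonal term $\binom{2k-2}{k-1}2^{-(2k-2)}$ counted once) turns this identity into the telescoping recursion $S_k - S_{k-1} = 2 - \binom{2k-2}{k-1}2^{-(2k-2)}$, so $S_k = 2k - \sum_{i=0}^{k-1}\binom{2i}{i}2^{-2i}$. Substituting back yields $\expect{e(O_k)}/n \to k/2 + \tfrac{1}{4}\sum_{i=0}^{k-1}\binom{2i}{i}2^{-2i}$, matching the claimed value of $o_k$.

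For concentration, I would observe that $O_k$ is determined by the first $\tau_k = O(n\log n)$ edges, and that swapping two adjacent edges in the random permutation alters $e(O_k)$ by at most $O(\Delta(\rgp_{n,\tau_k}))$, which is $O(\log n)$ with high probability by \Cref{prop:max-deg-bound}. McDiarmid's inequality (\Cref{thm:mcdiarmid}), applied via the standard coupling with i.i.d.\ uniform labels on $E(K_n)$, then gives $|e(O_k) - \expect{e(O_k)}| = o(n)$ with high probability, completing the proof. The main obstacle I would expect to wrestle with is making the Poisson approximation step fully quantitative: although asymptotic independence of the two hypergeometric counts follows cleanly from the disjointness of the edge sets at $u$ and $v$, the rate of convergence must be tracked uniformly over the integration range in order to match the $o(n)$ error demanded for $\expect{|D_k|}$.
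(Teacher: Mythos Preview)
Your derivation is correct in substance, and its final telescoping step coincides exactly with the paper's: your $S_k$ is the paper's $f(k-1)=\sum_{i,j=0}^{k-1}\binom{i+j}{i}2^{-(i+j)}$, and your recursion $S_k-S_{k-1}=2-\binom{2k-2}{k-1}2^{-(2k-2)}$ is the paper's $f(k)=f(k-1)+2-\binom{2k}{k}2^{-2k}$ shifted by one. The difference is in what comes before. The paper's argument is essentially a citation: it takes the identity $o_k=k-f(k-1)/4$ as already established in \cite{budget} (which in turn builds on \cite{knn-cooper-frieze}) and then does only the algebra. You instead re-derive that identity from scratch via the double-count $e(O_k)=kn-|D_k|$, the Poisson approximation for $\prob{uv\in D_k}$, and the Gamma integral that produces $S_k$. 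This is a genuinely more self-contained route, and it buys independence from the cited computation; the cost is that the proposition as stated only asks for the value of the constant $o_k$, which the paper has already defined by reference to \cite{knn-cooper-frieze}, so the concentration and Poisson-limit work you do is strictly more than required.

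One small wrinkle in your concentration sketch: if you apply McDiarmid via i.i.d.\ uniform labels on \emph{all} of $E(K_n)$, you have $\binom{n}{2}$ variables, and with any constant (or even $O(\log n)$) Lipschitz bound the denominator $\sum_k c_k^2$ is $\Theta(n^2)$, which does not give $o(n)$ fluctuations with high probability. The remedy, which the paper itself uses in the proof of \Cref{claim:not-too-many-edges}, is to first condition on $\rgp_{n,m}$ for some $m=n\,(\log n)^{O(1)}$ large enough that $\delta(\rgp_{n,m})\ge k$ with high probability, and then run McDiarmid over the $m$ labels of its edges; with $m=O(n\,\mathrm{polylog}\,n)$ this gives the needed concentration. (Incidentally, changing a single label alters $e(O_k)$ by $O(1)$, not $O(\Delta)$: only $N_k(u)$ and $N_k(v)$ can be affected when $X_{uv}$ changes, and each by at most one edge in and one out.)
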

\begin{proof}[Sketch]
    Following the proof of Corollary 3.2 in \cite{budget},
    set 
    \begin{equation*} 
         f(k) = \sum_{i,j=0}^{k} \binom{i+j}{i} 2^{-i-j}.
    \end{equation*}
    Then~\cite[Corollary 3.2]{budget} proves that
    \(o_k = k - \frac{f(k-1)}{4}\).
    Since, as noted in~\cite[Corollary 3.2]{budget},
    \(f(k) = f(k-1) + 2 - \binom{2k}{k}2^{-2k}\) and \(f(0)=1\), we have
    \[
    f(k) = 1 + 2k - \sum_{i=1}^{k} \binom{2i}{i} 2^{-2i}.
    \]
    Substituting in \(o_k = k - \frac{f(k-1)}{4}\) yields the required expression (summing now the binomial coefficients from \(i=0\)).
\end{proof}
The previous proposition directly implies the following.
\begin{corollary} \label{cor:ok-bound}
    \(o_1 = 3/4\) and \(o_k \ge k/2 + 3/8\) for \(k\ge 2\).
\end{corollary}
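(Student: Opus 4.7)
The plan is to derive both statements as direct consequences of the closed form for $o_k$ given by \Cref{prop:ok-bound}. This is a purely computational argument — there is no genuine obstacle — so the main thing to do is to extract the right partial sums.

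For the first claim, I would simply substitute $k = 1$ into the formula $o_k = \frac{k}{2} + \frac{1}{4}\sum_{i=0}^{k-1}\binom{2i}{i}2^{-2i}$. The sum collapses to its $i=0$ term, which equals $\binom{0}{0} \cdot 2^0 = 1$, yielding $o_1 = \frac{1}{2} + \frac{1}{4} = \frac{3}{4}$, as required.

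For the second claim, I would observe that each summand $\binom{2i}{i}2^{-2i}$ in the formula for $o_k$ is nonnegative, so for $k \ge 2$ the sum is bounded below by keeping only its first two terms. Those are $\binom{0}{0} \cdot 2^0 = 1$ and $\binom{2}{1} \cdot 2^{-2} = \frac{1}{2}$, giving
\[
\sum_{i=0}^{k-1}\binom{2i}{i}2^{-2i} \;\ge\; 1 + \frac{1}{2} \;=\; \frac{3}{2}.
\]
Multiplying by $\tfrac14$ and adding $\tfrac{k}{2}$ yields $o_k \ge \tfrac{k}{2} + \tfrac{3}{8}$, completing the proof.

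Since the argument is a one-line substitution followed by a trivial monotonicity observation, there is really no hard step; the only thing to be slightly careful about is that the indexing in \Cref{prop:ok-bound} starts at $i = 0$, so that the $k = 1$ case still captures the $\binom{0}{0}$ contribution and the $k \ge 2$ case picks up both the $i = 0$ and $i = 1$ terms.
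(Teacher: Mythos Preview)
Your proof is correct and matches the paper's approach exactly: the paper simply states that the corollary follows directly from \Cref{prop:ok-bound}, and you have spelled out precisely the substitution and the two-term lower bound that make this immediate.
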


The next lemma will be useful when Builder follows a certain strategy for the first $Cn$ steps of the random graph, and then we wish to estimate the degree of vertices in the graph spanned by the edges exposed between steps $Cn+1$ and $\tau_k$.

\begin{lemma} \label{lemma:repetition-coupling}
    Let \(k\ge 1\) be an integer,
    and \(m \in [ c_1 n \log n, c_2 n \log n]\), where \(c_1,c_2>0\) are constants.
    Let \(H\) be a graph on \(V\) with maximum degree at most \(10 \log n\).
    Suppose we draw uniformly at random and with  repetition edges from the complete graph on \(V\) until there are \(m\) distinct edges.
    Then, with high probability, for every \(v\in V\) such that at least one of the edges drawn is incident to \(v\) in \(H\),
    there are at least \(k\) distinct edges incident to \(v\) which are not in \(H\).
\end{lemma}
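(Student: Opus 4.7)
The plan is to observe that the set of distinct drawn edges has the distribution of $\rgp_{n,m}$, and then to bound, for each vertex $v$, the probability of the ``bad'' event that $v$ is incident to some drawn edge in $H$ but to fewer than $k$ drawn edges outside $H$. This probability will be $o(1/n)$ for every positive constant $c_1$, and a union bound over $V$ finishes.

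\emph{Reduction.} When edges of $K_n$ are drawn uniformly with repetition until $m$ distinct ones appear, by exchangeability the resulting set is uniform over the $m$-subsets of $E(K_n)$, i.e.\ it has the distribution of $\rgp_{n,m}$, so I may work with $\rgp_{n,m}$ in place of the sampled edges.

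\emph{Vertex estimate.} Fix $v \in V$, let $d = d_H(v) \le 10 \log n$ and $p = m/\binom{n}{2} = \Theta(\log n/n)$, and let $X_v$, $Y_v$ denote the numbers of edges of $\rgp_{n,m}$ at $v$ that are outside and inside $H$, respectively. Expanding the bad event as a union over which $H$-edge at $v$ is drawn,
\[
    \prob{Y_v \ge 1,\, X_v \le k-1} \;\le\; \sum_{e \in E(H),\, v \in e} \prob{e \in \rgp_{n,m}} \cdot \prob{X_v \le k-1 \mid e \in \rgp_{n,m}}.
\]
The first factor is exactly $p$. Conditional on $e \in \rgp_{n,m}$, the remaining $m-1$ edges form a uniform $(m-1)$-subset of $E(K_n) \setminus \{e\}$, so $X_v$ is hypergeometric with mean $(1 - o(1)) \cdot 2c_1 \log n$. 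Using the termwise estimate $\binom{n-1-d}{j} p^j (1-p)^{n-1-d-j} = O((\log n)^j) \cdot n^{-2c_1}$ for $j \le k-1$ (and the fact that hypergeometric Chernoff-type bounds match binomial ones by Hoeffding's majorisation), I obtain $\prob{X_v \le k-1 \mid e \in \rgp_{n,m}} = O((\log n)^{k-1}/n^{2c_1})$. Multiplying by $p$ and summing over the at most $10 \log n$ choices of $e$,
\[
    \prob{Y_v \ge 1,\, X_v \le k-1} \;=\; O\!\left((\log n)^{k+1} / n^{1 + 2c_1}\right).
\]
A union bound over $v \in V$ gives $O((\log n)^{k+1}/n^{2c_1}) = o(1)$, as required.

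The main subtlety is obtaining a \emph{polynomial} decay $n^{-2c_1}$ in the lower-tail estimate for $X_v$: a direct application of the multiplicative Chernoff bound at constant relative deviation gives only $e^{-\Omega(\mu)}$ with a constant in the exponent that can be insufficient for small $c_1$, so the union bound would fail. The sharper termwise estimate above, whose $(1-p)^{n-j} \approx n^{-2c_1}$ factor is the source of the polynomial decay, keeps the argument valid for every positive constant $c_1$.
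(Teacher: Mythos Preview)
Your proof is correct, and it proceeds along a genuinely different line from the paper's.

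Your key observation---that by exchangeability the $m$ distinct edges are distributed as $\rgp_{n,m}$---lets you discard the with-repetition process entirely and work in $\rgp_{n,m}$. The paper does \emph{not} make this reduction: it keeps the i.i.d.\ draws $\he_1,\dots,\he_t$, first proving (via a coupon-collector calculation) that $t\in[m,m+(\log n)^3]$ with high probability, and then exploits the independence of the draws. Specifically, for each vertex $v$ and each index $j\le t$ the events $A_{v,j}=\{\he_j\in L_v\}$ and $B_{v,j}=\{\text{fewer than }k\text{ distinct non-}H\text{ edges at }v\text{ among the other draws}\}$ are independent, so the paper multiplies $\prob{A_{v,j}}\le 10\log n/\binom{n}{2}$ by a direct binomial-type bound $\prob{B_{v,j}}\le n^{-c_1/2}$ and takes a union bound over all $n\cdot t$ pairs $(v,j)$.

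Your route trades that independence for a hypergeometric lower tail, which you handle via Hoeffding's convex-order comparison with the binomial; the Chernoff bound $e^{-\mu}(e\mu/(k-1))^{k-1}$ with $\mu=(1-o(1))2c_1\log n$ then gives your $O((\log n)^{k-1}/n^{2c_1})$. This is a slightly more sophisticated ingredient, but it eliminates both the preliminary estimate on $t$ and the double union over $v$ and $j$. Either approach produces the polynomial saving $n^{-\Theta(c_1)}$ needed for the union bound to succeed for every fixed $c_1>0$; the paper ends with $n^{-c_1/2}$ rather than your $n^{-2c_1}$, but both suffice. One small remark: your ``termwise estimate'' is really a statement about the \emph{binomial} PMF, and Hoeffding's theorem transfers only the Chernoff-type tail, not the individual terms---but since the Chernoff bound already yields $O((\log n)^{k-1}/n^{2c_1})$, your conclusion stands.
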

\begin{proof}
Let \(L_v\subseteq E(H)\) be the set of edges incident to \(v\in V\), so \(\abs{L_v} \le 10 \log n\).
Let \(t\) be the (random) number of edges we draw with repetition until we see \(m\) distinct edges.

We claim that with high probability \(t\in [m, m+ (\log n)^3]\).
Indeed, let  \(X_i, i\in [m]\), be the random variable counting the number of edges drawn until the \(i\)-th distinct edge, and set \(X_0 = 0\).
Then \(X_i - X_{i-1}\) is a geometric random variable with mean \(\frac{1}{1- (i-1)/\binom{n}{2}}\), and
\begin{align*}
    \expect{X_m}=
    \sum_{i=1}^m \expect{X_i - X_{i-1}}
    &=
        \sum_{i=1}^m \frac{1}{1-(i-1)/\binom{n}{2}}
    \\ 
    &=
    \sum_{i=1}^m \left( 1+ \frac{i-1}{\binom{n}{2}} + O\left( \frac{i^2}{n^4}\right) \right)
    \\
    &= m + \Theta\left(\frac{m^2}{n^2}\right) + O\left(\frac{m^3}{n^4}\right)
    \\
    &= m + \Theta((\log n)^2).
\end{align*}
Let \(Y\) be the number of repeated edges until we see \(m\) distinct edges. Then the above calculation implies \(\expect{Y} = O((\log n)^2)\), whence from Markov's inequality with high probability \(Y \le (\log n)^3\).
Therefore \(t\in [m, m+ (\log n)^3]\), with high probability.

To complete the proof of the lemma we now show that, when drawing \(t\) edges with repetition, for some $t \in [m, m + (\log n)^{3}]$, with high probability, for every \(v\) with at least one  edge drawn from \(L_v\), there are at least \(k\) (distinct) incident edges not in \(L_v\).
Let \(\he_1,\hdots, \he_t\) be the drawn edges  and 
let \(A_{v,j}\) be the event that 
\(\he_j \in L_v\).
Let \(B_{v,j}\) be the event that 
the number of distinct edges in \(\{\he_1,\dots, \he_{t}\}\setminus \he_j\)  incident to \(v\) and disjoint from \(L_v\) is at most \(k-1\).
The lemma follows from the next claim.
\begin{claim}
	For every $t \in [m, m + (\log n)^{3}]$, the probability that there exist 
	\(v\in V, j\in [t]\) such that \(A_{v,j}\cap B_{v,j}\) holds is at most $1/n$.
\end{claim}
\begin{proof}
Fix \(v\in V\), \(j\in [t]\).
Because \(\he_1,\hdots, \he_t\) are drawn with repetition, the events \(A_{v,j}, B_{v,j}\) are independent.
We have
\(\prob{A_{v,j}} \le \frac{10 \log n}{\binom{n}{2}}\).

For \(n\) sufficiently large,
\begin{align*}
    \prob{B_{v,j}}
    & \le 
    \sum_{\ell=0}^{k-1} \, 
    \binom{t}{\ell}\, 
    \left( \frac{n-1}{\binom{n}{2}} \right)^{\ell} \, 
    \left(1- \frac{n-1-10\log n-(k-1)}{\binom{n}{2}} \right) ^{t -\ell}
	\\[.2em]
    & \le \sum_{\ell=0}^{k-1}
    (c_2n\log n + (\log n)^{3})^{\ell}\
    2^\ell n^{-\ell}\
    \left(1- \frac{(2-o(1))}{n}\right)^{(c_1-o(1))n \log n}
	\\[.2em]
    & \le \sum_{\ell=0}^{k-1}
    \left(2c_2 n \log n \right)^{\ell}\
    2^\ell\: n^{-\ell}\;
    \exp\left(-c_1\log n \right)
	\\[.2em]
    & \le \sum_{\ell=0}^{k-1}
	(4c_2)^{\ell} (\log n)^{\ell} \: n^{-c_1}
	\\[.2em]
    & \le (\log n)^k\,  n^{-c_1} \\
    & \le n^{-c_1/2}.
\end{align*}
Hence,  the probability there exist \(v\in V, j \in [t]\) such that \(A_{v,j} \cap B_{v,j}\) holds is, by the union  bound over the choice of \(v, j\), at most
\(
n\cdot t \cdot \frac{10\log n}{\binom{n}{2}} \cdot n^{-c_1/2} \le 1/n.
\)
\end{proof}
This completes the proof of the lemma.
\end{proof}

\section{Proof of \Cref{thm:k-at-least-2}}
Given \(k\ge 2\) and \(\delta>0\),
fix \(0<\eps<1\) and \(C>0\) such that:
\(\eps < \frac{\delta }{2k}\) and  \(C = k \eps^{-2} \).

Before delving into the details, we outline Builder's strategy, which we give formally in~\Cref{algo-deg-k}.
For the first \(Cn\) edges Builder's strategy is as follows.
For \(i\in [Cn]\), Builder purchases \(e_i = x_i y_i\) if and only if either  \(\deg_{B_{i-1}}(x_i), \deg_{B_{i-1}}(y_i) < k\) or at least one of \(x_i, y_i\) has degree 0 in \(\rgp_{n,i-1}\).
We call edges of the former kind \emph{efficient} and edges of the latter kind (which are not of the former kind) \emph{inefficient}.
Builder's strategy for \(i\ge Cn +1\) is to purchase \(e_i\) if and only if at least one end of \(e_i\) has degree less than \(k\) in \(B_{i-1}\).

\begin{algorithm}[h]
\caption{Builder's strategy for a graph with minimum degree at least \(k\) by \(\tau_k\)} \label{algo-deg-k}
    \begin{algorithmic}[1]
      \State \textbf{Input:} Let \(e_1,\dots, e_N\) be a uniformly random permutation of \(E(K_n)\).
      \State \(B_0 \gets \emptyset\)
      \For{\(i=1,\dots, Cn\)}
        \If{both ends of \(e_i\) have degree less than $k$ in $B_{i-1}$ or least one of has degree $0$}
            \State \(B_i \gets B_{i-1} \cup e_i\)
        \Else 
            \State \(B_i \gets B_{i-1}\)
        \EndIf
      \EndFor

      \For{\(i=Cn+1,\dots, \tau_k\)}
        \If{at least one end of \(e_i\) has degree less than \(k\) in \(B_{i-1}\)}
            \State \(B_i \gets B_{i-1} \cup e_i\)
        \Else 
            \State \(B_i \gets B_{i-1}\)
        \EndIf
      \EndFor
    \State \textbf{return} \(B_{\tau_k}\)
    \end{algorithmic}
\end{algorithm}

For \(i\in [Cn]\) let
\[
     Z_{i} = \{ v\in V: \deg_{\rgp_{n,i}}(v) =0  \}
\]
and
\[
    Y_{i} = \{v\in V:  1 \le \deg_{B_{i}}(v) \le k-1\}.
\]
The next claim shows that, with high probability, after the first \(Cn\) edges are exposed almost all vertices in Builder's graph have degree \(k\), yet her graph has relatively few edges.
\begin{claim} \label{claim:not-too-many-edges}
    Reveal the first \(Cn\) edges in the random graph process and suppose Builder follows \Cref{algo-deg-k}. Then with high probability
    \begin{enumerate}
        \item \(e(B_{Cn}) \le (k/2 + 2^{-k} + \delta/2)n\) and
        \item \(\abs{Y_{Cn} \cup Z_{Cn}} \le \eps n \).
    \end{enumerate}
\end{claim}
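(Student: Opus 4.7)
The plan is to handle the two parts separately. Part~(2) is a direct corollary of \Cref{lemma:very-greedy}, while part~(1) requires a first-moment analysis of the ``inefficient'' edges purchased under clause~(b) of \Cref{algo-deg-k}.

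For part~(2), observe that \Cref{algo-deg-k} satisfies the hypothesis of \Cref{lemma:very-greedy}, so $|U_{Cn}| \le \eps n$ with high probability, where $U_{Cn} = \{v : \deg_{B_{Cn}}(v) < k\}$. Since clause~(b) of \Cref{algo-deg-k} ensures every vertex's first edge in the process is purchased, $\deg_{\rgp_{n,Cn}}(v) \ge 1$ implies $\deg_{B_{Cn}}(v) \ge 1$; hence $U_{Cn} = Y_{Cn} \cup Z_{Cn}$, giving the bound.

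For part~(1), partition the purchased edges into \emph{efficient} edges $E_1$ (bought under clause~(a)) and \emph{inefficient} edges $E_2$ (bought under clause~(b) but not~(a)). Once $\deg_{B_{i-1}}(v) \ge k$, no further edge at $v$ can be efficient, so each vertex is incident to at most $k$ efficient edges, giving $E_1 \le kn/2$ deterministically. The task reduces to showing $E_2 \le (2^{-k} + \delta/2)n$ with high probability. Every inefficient edge has a unique endpoint isolated in $\rgp_{n, i-1}$, so $E_2 = \sum_v X_v$ where $X_v$ indicates that $v$'s first edge $F_v$ appears within the first $Cn$ steps and the other endpoint of $F_v$ has $B$-degree $\ge k$ at that moment. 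To estimate $\expect{X_v}$, I condition on $T_v = t$, the step at which $F_v$ is revealed. By symmetry of the uniform random permutation, conditional on $T_v = t$ the partner $w_v$ is uniform on $V \setminus \{v\}$ and independent of $\rgp_{n,t-1}$, which is distributed as a uniform random graph on $V \setminus \{v\}$ with $t-1$ edges. Bounding $\deg_{B_{t-1}}(w_v) \le \deg_{\rgp_{n,t-1}}(w_v)$ and invoking the Poisson approximation (cf.\ \Cref{thm:vts-small-deg}),
\[
\expect{X_v} \;\le\; \int_0^{2C} e^{-s}\,\prob{\mathrm{Po}(s) \ge k}\, ds + o(1) \;=\; \sum_{j \ge k}\frac{1}{j!}\int_0^{2C} s^j e^{-2s}\, ds + o(1) \;\le\; \sum_{j \ge k} 2^{-j-1} + o(1) \;=\; 2^{-k} + o(1),
\]
where I used $\prob{T_v = t} = (1+o(1))(2/n)e^{-2t/n}$ and substituted $s = 2t/n$. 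Since $\eps < \delta/(2k)$ forces $C = k\eps^{-2}$ to be large, the truncation error $e^{-2C}$ and the Poisson-approximation error are negligible, so $\expect{E_2} \le (2^{-k} + \delta/4)n$.

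Finally, on the high-probability event $\Delta(\rgp_{n,Cn}) \le 10 \log n$ (\Cref{prop:max-deg-bound}), the edge-exposure Doob martingale for $E_2$ has differences bounded by $O(\log n)$ --- revealing $e_i$ affects only the $O(\log n)$ future purchases at its endpoints --- so Azuma--Hoeffding yields $|E_2 - \expect{E_2}| = O(\sqrt{n}\log n) = o(n)$ with high probability. Combining, $e(B_{Cn}) = E_1 + E_2 \le (k/2 + 2^{-k} + \delta/2)n$ with high probability. The main obstacle is the careful conditional first-moment analysis: establishing the distribution of $w_v$ given $T_v = t$ and justifying the Poisson approximation uniformly in $t \le Cn$.
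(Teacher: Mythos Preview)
Your treatment of part~(2) and of the efficient edges matches the paper. Your first-moment computation for the inefficient edges is a clean alternative to the paper's edge-centric calculation via the quantities $\phi(r,1)$ (edges that raise the degrees of their endpoints to $r$ and $1$ in the process): both routes boil down to $\sum_{j\ge k}2^{-j-1}=2^{-k}$.

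The genuine gap is in the concentration step. You claim that the edge-exposure martingale for $E_2$ has $O(\log n)$ differences because ``revealing $e_i$ affects only the $O(\log n)$ future purchases at its endpoints''. This ignores cascading through Builder's algorithm: if changing $e_i$ flips the purchase decision on some later $e_\ell=uw$ with $u\in e_i$, then $w$'s $B$-degree changes, which can flip the decision on a still later edge at $w$, and so on along paths in $\rgp_{n,Cn}$. Since $\rgp_{n,Cn}$ is supercritical for your choice of $C$, there is no reason this cascade stays of size $O(\log n)$; the Lipschitz bound for $E_2$ itself is unjustified. The paper sidesteps this by never concentrating a Builder-dependent quantity: it upper bounds the inefficient edges by $\sum_{r\ge k+1}\phi(r,1)$, which depends only on the random permutation, and proves concentration for each $\phi(r,1)$ via McDiarmid after conditioning on $\rgp_{n,m}$ with $m=\tfrac{1}{2}n(\log n)^{10}$ (then randomising the order by independent real labels, which gives an honest bounded-difference setup with Lipschitz constant $4$). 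Your argument is easily repaired in the same spirit: run the concentration on $\tilde E_2=\sum_v \tilde X_v$, where $\tilde X_v$ uses $\deg_{\rgp_{n,T_v-1}}(w_v)\ge k$ in place of the $B$-degree --- the very upper bound you already used for the expectation. That quantity is Builder-independent, and swapping two edges changes at most $O(\log n)$ of the $\tilde X_v$ on the event $\Delta(\rgp_{n,Cn})\le 10\log n$. As written, however, the Azuma step is applied to the wrong object.
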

The next claim implies that, with high probability, by time \(\tau_k\) Builder's graph has minimum degree at least \(k\).
\begin{claim} \label{claim:zero-vts-get-deg-k-last}
	With high probability, for every \(v\in Y_{Cn}\), there are at least \(k-1\)  edges incident to \(v\)
	among \(e_{Cn+1},\hdots, e_{\tau_k}\).
\end{claim}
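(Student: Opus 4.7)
The plan is to deduce the claim from \Cref{lemma:repetition-coupling} applied with $H := \rgp_{n,Cn}$ and a deterministic target $m$ of order $n\log n$ lying below $\tau_k$ with high probability. First, fix a constant $c \in (0, 1/2)$, say $c = 1/3$, and set $m := \lceil c n \log n \rceil$. Writing $m = \tfrac{n}{2}\bigl(\log n + (k-1)\log\log n + f(n)\bigr)$ gives $f(n) = (2c-1)\log n - (k-1)\log\log n \to -\infty$, so \Cref{thm:min-deg-k} yields $m < \tau_k$ with high probability; at the same time \Cref{prop:max-deg-bound} gives $\Delta(H) \le 10\log n$ with high probability, so $H$ satisfies the maximum-degree hypothesis of \Cref{lemma:repetition-coupling}.

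Next I would use the coupling in the hypothesis of \Cref{lemma:repetition-coupling}: draw $\he_1, \he_2, \ldots$ i.i.d.\ uniformly from $E(K_n)$ and let $e_1, e_2, \ldots$ be the distinct edges in order of first appearance, so that $\rgp_{n,m}$ is the set of the first $m$ distinct draws. Every edge of $H = \rgp_{n,Cn}$ is itself a drawn edge, so for any $v$ with $\deg_H(v) \ge 1$ the lemma's hypothesis ``at least one drawn edge is incident to $v$ in $H$'' holds automatically; the lemma then yields that, with high probability, every such $v$ has at least $k$ distinct edges in $\rgp_{n,m}\setminus H = \{e_{Cn+1}, \ldots, e_m\}$ incident to it.

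To finish, observe that every $v \in Y_{Cn}$ has $\deg_{B_{Cn}}(v) \ge 1$ and hence $\deg_H(v) \ge 1$ (since $B_{Cn} \subseteq \rgp_{n,Cn} = H$), so the previous step produces at least $k$ edges of $\{e_{Cn+1}, \ldots, e_m\}$ incident to $v$. Since $m \le \tau_k$ with high probability, these edges all lie in $\{e_{Cn+1}, \ldots, e_{\tau_k}\}$, giving at least $k \ge k-1$ incident edges, which is exactly what the claim asserts.

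The main subtlety I expect is that $H = \rgp_{n,Cn}$ is itself a random function of the $\he_i$'s, whereas \Cref{lemma:repetition-coupling} is stated for an externally fixed $H$. I would handle this by first conditioning on a realization $H_0$ of $\rgp_{n,Cn}$ with $\Delta(H_0) \le 10\log n$: once conditioned, the subsequent draws remain i.i.d.\ uniform from $E(K_n)$ and are independent of $H_0$, so the lemma's proof applies verbatim with $H = H_0$, and the unconditional failure probability is then bounded by the probability that $\Delta(H) > 10\log n$ plus the supremum (over admissible $H_0$) of the conditional failure probabilities, both of which are $o(1)$.
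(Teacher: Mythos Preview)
There is a genuine gap: the intermediate statement you deduce from \Cref{lemma:repetition-coupling} is actually false for your choice of $m$. With $m=\lceil n\log n/3\rceil$ the graph $\rgp_{n,m}$ has, with high probability, $\Theta(n^{1/3}\log n)$ vertices of degree exactly~$1$; for each such $v$ its unique incident edge lies in $\{e_1,\ldots,e_{Cn}\}$ with conditional probability $Cn/m=\Theta(1/\log n)$, so a routine second-moment argument produces, with high probability, $\Theta(n^{1/3})$ vertices $v$ with $\deg_{\rgp_{n,Cn}}(v)=\deg_{\rgp_{n,m}}(v)=1$. Every such $v$ belongs to $Y_{Cn}$ (its first incident edge is always purchased by line~4 of \Cref{algo-deg-k}), yet has \emph{zero} incident edges among $e_{Cn+1},\ldots,e_m$. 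Hence the assertion ``every $v$ with $\deg_H(v)\ge1$ has at least $k$ incident edges in $\{e_{Cn+1},\ldots,e_m\}$'' fails with high probability, and your choice of $m$ is simply too small to carry the argument.

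The underlying error is applying \Cref{lemma:repetition-coupling} with $H$ determined by the very draws the lemma averages over; its proof uses that each $\he_j$ is uniform independently of $L_v$, which breaks once $L_v$ is a function of the early $\he_j$. Your conditioning fix does not repair this: after conditioning on $\rgp_{n,Cn}=H_0$ and restricting to the subsequent (genuinely i.i.d.) draws, the lemma's hypothesis becomes ``some \emph{subsequent} draw is an edge of $H_0$ incident to $v$'', which need not hold for $v\in Y_{Cn}$, so the automatic verification you relied on via ``every edge of $H$ is itself a drawn edge'' is lost. The paper follows essentially the route your fix begins, but supplies the missing ingredient: it uses fresh draws after conditioning on $\rgp_{n,Cn}$ and chooses their number $t$ at the threshold for minimum degree $k-1$, so that the resulting $\hG$ satisfies $\delta(\hG)\ge k-1$ with high probability; then for each $v\in Y_{Cn}$ either some draw repeats an $H$-edge at $v$ (and \Cref{lemma:repetition-coupling} applies) or every one of the $\ge k-1$ edges of $\hG$ at $v$ already avoids $H$.
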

    
Before proving the claims, we show how to use them to derive \Cref{thm:k-at-least-2}.
\begin{proof}[Proof of \Cref{thm:k-at-least-2}]
    First we argue that following \Cref{algo-deg-k}, with high probability, \(\deg_{B_{\tau_k}}(v) \ge k\) for all \(v\in V\).
    Lines 4 and 11 of \Cref{algo-deg-k} imply that \(\deg_{B_{i}}(v) = 0\) if and only if \(\deg_{\rgp_{n,i}}(v) = 0\).
    Hence, for every \(v\notin Y_{Cn} \cup Z_{Cn}\),  \(\deg_{B_{Cn}}(v) \ge k\).
    By definition of \(\tau_k\) and \(Z_{Cn}\), every vertex in \(Z_{Cn}\) will see at least \(k\) incident edges among \(\{e_{Cn+1}, \hdots, e_{\tau_k}\}\), since none of the first \(Cn\) edges are incident to \(Z_{Cn}\).
    By \Cref{claim:zero-vts-get-deg-k-last}, with high probability, every vertex in \(Y_{Cn}\) will see at least \(k-1\) incident edges among \(\{e_{Cn+1},\hdots, e_{\tau_k}\}\).
    Therefore, in the second for-loop of \Cref{algo-deg-k} Builder will purchase at least \(k\) incident edges for each \(v\in Z_{Cn}\), and at least \(k-\deg_{B_{Cn}}(v)\) incident edges to each \(v\in Y_{Cn}\).
    Hence, with high probability, \(\deg_{B_{\tau_k}}(v) \ge k\) for all \(v\in V\).

    It remains to show that, with high probability, 
    \(e(B_{\tau_k}) \le (k/2 + 2^{-k} + \delta )n\).
    By \Cref{claim:not-too-many-edges},
    with high probability \(\abs{Y_{Cn} \cup Z_{Cn}} \le \eps n\).
    Hence, the number of edges Builder purchases during the second for-loop of \Cref{algo-deg-k} is at most \(k\eps n\).
    Therefore, with high probability, the total number of edges bought is at most
    \[
    e(B_{Cn}) + k \eps n
    \le 
    (k/2 + 2^{-k} + \delta/2 + k \eps)n
    \le (k/2 + 2^{-k} + \delta) n,
    \]
    where we used the upper bound on \(e(B_{Cn})\) from \Cref{claim:not-too-many-edges}.
\end{proof}

We now prove the two claims.

\begin{proof}[Proof of \Cref{claim:not-too-many-edges}]
\newcommand{\rgpnm}{\rgp_{n,m}}
    \Cref{lemma:very-greedy} implies the second point of the lemma.
    Clearly, the number of efficient edges bought is at most \(kn/2\).
    It remains to show that, with high probability, the number of inefficient edges is  at most \((2^{-k} + \delta/2 )n\).

    For \(r,s \in [n], r\ge s,\) let \(\Phi(r,s)\) be the collection of edges in the \emph{whole} random graph process (i.e.\ all \(\binom{n}{2}\) edges) which increase the degree of one of their ends to \(r\) and the other end to \(s\), and set \(\phi(r,s) = \abs{\Phi(r,s)}\).
    The inefficient edges are a subset of \(\rgp_{n,Cn}\), and by \Cref{prop:max-deg-bound}, with high probability \(\Delta(\rgp_{n,Cn}) \le 10 \log n\).
    Therefore, with high probability, the number of inefficient edges is at most 
     \(
    \sum_{r=k+1}^{10\log n} \phi(r,1).
    \)

    We want to show, using McDiarmid's inequality, that \(\phi(r,s)\) is, with high probability, concentrated around its mean, for $s \le r \le 10 \log n$.
    However, because \(\expect{\phi(r,s)}\) is linear (as we shall soon see) and is determined by quadratically many variables, applying McDiarmid's inequality directly to $\phi(r,s)$ would not give concentration with high probability.
    To avoid this issue, 
    let \(m=n (\log n)^{10}/2\), let \(\Phi_m(r,s) = \Phi(r,s) \cap \rgp_{n,m}\) be the subset of the first \(m\) edges in the random graph process that are in \(\Phi(r,s)\) and write \(\phi_m(r,s) = \abs{\Phi_m(r,s)}\).

	Define
	\begin{equation*}
		\mu_{r,s} = \frac{1}{2^{\delta(r,s)}} \binom{r+s-1}{s-1}2^{-r-s+1},
	\end{equation*}
	where $\delta(r,s)$ is $1$ if $r = s$ and $0$ otherwise (Kronecker delta).
	We will show that, with high probability, $\phi(r,s) = (1 + o(1))\mu_{r,s} n$ for all $1 \le s \le r \le 10 \log n$.

	To do so, we first note that $\phi(r,s) = \phi_m(r,s)$, with sufficiently high probability.
    To obtain concentration,
    we will reveal the first \(m\) steps of the random graph process in two stages.
	First we sample the graph at step \(m\) of the random graph process, \(\rgp_{n,m}\), and show that with sufficiently high probability \(\rgp_{n,m}\) is \emph{almost-regular} i.e.\ \( \deg_{\rgp_{n,m}}(v) = (1 + O((\log n)^{-2})) (\log n)^{10}\) for all \(v\in V\).
    We will then calculate the expectation of \(\phi_m(r,s)\) conditioned on \(\rgp_{n,m}\) being any given graph \(G\), and show that this expectation is $(1 + o(1))\mu_{r,s}n$ for every almost-regular $G$.
    Next, conditioned on \(\rgp_{n,m}\) being a fixed graph \(G\), we sample the first \(m\) steps of the random graph process by selecting a uniformly  random permutation of \(E(G)\).
    Using McDiarmid's inequality, we will show that \(\phi_m(r,s)\) conditioned on \(\rgpnm = G\) is, with high probability, concentrated around its mean. Then the required result, i.e.\ the concentration of \(\phi(r,s)\) around its mean for all relevant $r,s$, will follow easily.
    
	Fix $r,s$ such that $1 \le s \le r \le 10 \log n$.
	First, note that $\phi(r,s) = \phi_m(r,s)$ whenever $\delta(\rgp_{n,m}) \ge 10 \log n$, so $\phi(r,s) = \phi_m(r,s)$ with probability $1 - \exp\left(-\Omega((\log n)^{2}\right)$.

	Now let $G$ be any graph on $V$ with $m$ edges.
    We claim that $\bigexpect{\phi_m(r,s)\: |\: \rgp_{n,m} = G }$ equals
    \begin{align} \label{eq:expect-phi}
		\frac{1}{2^{\delta(r,s)}}\sum_{u, v \in V(G):\,\, uv \in E(G)} \frac{1}{\deg_{G}(u) + \deg_{G}(v)-1} 
		\,
		\frac{\dbinom{\, \deg_{G}(u)-1 \,}{r-1} \, \dbinom{\, \deg_{G}(v)-1}{s-1}\, }
		{\dbinom{\, \deg_{G}(u)\: +\: \deg_{G}(v) - 2 \,}{r+s-2}}.
	\end{align}
	This was (essentially) already noted in \cite{knn-cooper-frieze} for \(r,s \in [k]\) (\(k\) constant), when we do not condition on \(\rgp_{n,m}\) being any fixed graph.
    To see why this is true, note that for \(r\neq s\), the event \(e=uv \in \Phi_m(r,s)\) is the union of the (disjoint) events that \(e\) gives degree \(r\) to \(u\) and degree \(s\) to \(v\), and vice-versa. For \(r=s\) of course we have only one such event.
    Thus it suffices to show that the probability, over a random permutation of \(E(G)\), that \(uv\) gives degree \(r\) to \(u\) and \(s\) to \(v\) is
	\[
		\frac{1}{\deg_{G}(u) + \deg_{G}(v)-1}
		\,
		\frac{\dbinom{\, \deg_{G}(u) - 1\,}{r-1} \, \dbinom{\, \deg_{G}(v)- 1}{s-1}\, }
		{\dbinom{\, \deg_{G}(u)\: +\: \deg_{G}(v) - 2 \,}{r+s-2}}
	\]
    and then \eqref{eq:expect-phi} follows by linearity of expectation.
    The event in question is determined by the first \(r+s -1\) edges incident to \(\{u,v\}\) and occurs if both \(uv\) is the edge at position \(r+s -1\) in the permutation of the \(\deg_{G}(u) + \deg_{G}(v)-1\) edges incident to either \(u\) or \(v\); and among the first \(r+s-2\) edges incident to either \(u\) or \(v\), exactly \(r-1\) are incident to \(u\) and \(s-1\) are incident to \(v\). 
    It readily follows that this event occurs with the claimed probability. Indeed, the denominator is the number of ways of choosing $r+s-1$ edges among the $\deg_G(u) + \deg_G(v) - 1$ edges touching $u$ or $v$, with a single distinguished edge (the one in position $r+s-1$), and the numerator is the number of ways of choosing this many edges so that the distinguished edge is $uv$ and the remaining $r+s-2$ edges consist of $r-1$ edges touching $u$ and $s-1$ edges touching $v$.

	Now suppose that $G$ is almost-regular, i.e.\ that $\deg_G(v) = (1 + O((\log n)^{-2}) (\log n)^{10}$ for every $v \in V$.
	Using this assumption, \eqref{eq:expect-phi} and that $\binom{a}{b} = \left(1 + O(\frac{b^2}{a})\right) \frac{a^b}{b!}$, we get that for all such $G$,
	\begin{equation} \label{eq:cond-expect-phi}
        \bigexpect{\phi_m(r,s)\: |\: \rgp_{n,m} = G } =
        (1+o(1))\, \frac{1}{2^{\delta(r,s)}}\,
    \binom{r+s-1}{s-1}\, 2^{-r-s+1}\, n.
    \end{equation}
    We now show that \(\phi_m(r,s)\) conditioned on \(\rgp_{n,m} = G\) is concentrated around its mean.
    For this we use McDiarmid's inequality,
    which is applicable in the random graph process, because we can generate a uniformly random permutation of \(E(G)\) in the following way\footnote{Simply permuting uniformly at random \(E(G)\) does not allow us to use McDiarmid's inequality because the position of edges are not independent from one another.}.
    Fix a continuous probability distribution \(\mathcal{D}\) and for each \(e\in E(G)\) let \(X_e \sim \mathcal{D}\) be an independent sample from \(\mathcal{D}\).
    Ordering the edges from smallest sampled value to largest yields a uniformly random permutation of $E(G)$.
    To apply the inequality for \(\phi_m(r,s)\),
    suppose we unilaterally change the value \(X_{uv}\) sampled for the edge \(uv\in \rgp_{n,m}\), and leave the values sampled for all other edges \(\{ X_e: e\in E(G)\setminus uv\}\) unchanged.
    We claim that then \(\phi_m(r,s)\) changes by at most 4.
	To see this,
	observe that the graph spanned by \(\Phi_m(r,s)\) has \(\Delta(\Phi_m(r,s)) \le 2\): each vertex can only be incident to edges whose position in the random permutation increase its degree to \(r\) or \(s\).
	Changing \(X_{uv}\) can affect only edges incident to \(u\) or \(v\), 
    hence changing \(\phi_m(r,s)\) by at most \(4\).
    Thus, McDiarmid's inequality (\Cref{thm:mcdiarmid}) implies that
	\begin{align*}
		\prob{\, \big|\phi_m(r,s) - \bigexpect{\phi_m(r,s)\: |\: \rgp_{n,m} = G} \big| \ge n^{2/3}  } 
		& \le 2\exp\left(-\frac{2n^{4/3}}{16m}\right) \\
		& = \exp\left(-\Omega\left((\log n)^{2}\right)\right).
	\end{align*}
	In particular, using \eqref{eq:cond-expect-phi}, with probability $1 - \exp\left(-\Omega\left((\log n)^{2}\right)\right)$, we have that $\phi_m(r,s)$ conditioned on $\rgp_{n,m} = G$ is $(1 + o(1))\mu_{r,s}n$.
    
	Therefore, since \(\rgpnm\) is almost-regular with probability $1 - \exp\left(-\Omega\left((\log n)^{2}\right)\right)$ (by Chernoff's bound, \Cref{thm:Chernoff}), because we have $\phi_m(r,s) = \phi(r,s)$ with probability $1 - \exp\left(-\Omega\left((\log n)^{2}\right)\right)$, and by a union bound over $s,r$ with $1 \le s \le r \le 10\log n$, we have that $\phi(r,s) = (1 + o(1))\mu_{r,s} n$, with high probability.
    
    In particular, with high probability, \(\phi(r,1) = (1 + o(1))2^{-r}\), for every $r$ with \(k+1 \le r \le 10 \log n\).
    Hence, with high probability, the number of inefficient edges bought is at most
    \[
    (1+o(1)) \left(\sum_{r=k+1}^{10 \log n} 2^{-r} n\right) \le  (2^{-k} + \delta/2) n,
    \]
    as required. 
\end{proof}

\begin{proof}[Proof of \Cref{claim:zero-vts-get-deg-k-last}]
	By \Cref{prop:max-deg-bound}, with high probability \(\Delta(\rgp_{n,Cn}) \le 10 \log n\). Condition on this event.

	Draw with repetition edges 
	\((\he_i)_{i\ge 1}\) until
	\(t = \frac{n}{2} (\log n + (k-2) \log \log n + \log \log \log n)\) distinct edges are drawn.
	Let \(\hG\) be the graph of these \(t\) distinct edges and
	let \(\hG_i = \{\he_1,\hdots, \he_i\}\).

	We couple \((\hG_i)_{i\ge 1}\) with \((\rgp_{n,j})_{j\ge Cn +1}\) in the following natural way. 
	Suppose \(\rgp_{n,j}\) is the current stage of the random graph process and \(\he_i\) is the edge we have just drawn.
	If \(\he_i \notin \rgp_{n,j}\), we set \(e_{j+1} := \he_i\) and so
	update \(\rgp_{n,j+1} := \rgp_{n,j} \cup \{ \he_i \} \). Otherwise, \(e_{j+1}\) remains unrevealed, we do not update \(\rgp_{n,j}\), and keep drawing edges with repetition until we draw a new one. Clearly every edge not in \(\rgp_{n,j}\) has the same probability of being added, so the coupling indeed induces the random graph process.

	By \Cref{lemma:repetition-coupling} and \Cref{thm:min-deg-k}, with high probability, every \(v\in Y_{Cn}\) has \(k-1\) distinct incident edges in \(\hG\setminus \rgp_{n,Cn}\): indeed, by \Cref{thm:min-deg-k} with high probability every vertex \(v\) has at least \(k-1\) incident edges in \(\hG\), and for each \(v\) either all incident edges are disjoint from \(\rgp_{n,Cn}\), or otherwise by \Cref{lemma:repetition-coupling} at least \(k-1\) are.
	The latter is applicable since \(t=\Theta(n \log n)\) and \(\Delta(\rgp_{n,Cn}) \le 10 \log n\).

	Finally, by \Cref{thm:min-deg-k}, with high probability,
	\(\tau_k > t + Cn\).
	Hence, indeed we have \(E(\hG\setminus \rgp_{n,Cn}) \subseteq \{ e_{Cn+1}, \hdots, e_{\tau_k}\}\), with high probability.
\end{proof}

\section{Proof of \Cref{thm:k-1:any-fails,thm:k-1:strategy}}
Set \(\eps = C^{-1/2}\).
We assume \(\eps<3/4\), since otherwise both theorems easily hold.
\Cref{thm:k-1:any-fails} trivially holds because then Builder's graph is empty. \Cref{thm:k-1:strategy} holds because then Builder can purchase \((o_1+1/8)n\) edges by emulating \(O_1\) (using \(o_1 = 3/4\)).

We will keep track of the following sets of vertices.
\begin{align*}
    X_i &= \{v\in V: \deg_{B_i}(v) \ge 1 \}\\
    Y_i &= \{v \in V: \deg_{B_i}(v) =0 \text{ and } \deg_{\rgp_{n,i}}(v) \ge 1  \} \\
    Z_i &= \{v\in V: \deg_{\rgp_{n,i}}(v) =0 \}.
\end{align*}
For both theorems we will use the following lemma, which we prove at the end of this section.
Notice the first part of the lemma makes no assumption whatsoever on Builder's strategy.

\begin{lemma} \label{lemma:success-prob-formula}
    Let \(m\in [Cn]\) and suppose Builder has gone through the first \(m\) steps of the random graph process.
    For every strategy Builder may follow for edges \(e_j, j\ge m+1\),  \(B_{\tau_1}\) has an isolated vertex with probability at least \(\frac{(1-o(1))\abs{Y_{m}}}{\abs{Y_{m}} + \abs{Z_{m}}}\).
    
    Suppose that for \(j\ge m+1\) Builder follows the strategy of purchasing \(e_j\) if and only if at least one end of \(e_j\) is isolated in \(B_{j-1}\).
    Then with probability \(\frac{(1-o(1))\abs{Z_{m}}}{\abs{Y_{m}} + \abs{Z_{m}}}\), \(B_{\tau_1}\) has no isolated vertices.
\end{lemma}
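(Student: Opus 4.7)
The plan is to reduce both parts to an analysis of $\arg\max_{w \in A} T'_w$, where $A = Y_m \cup Z_m$ and, for each $w \in A$, $T'_w$ denotes the position of the first edge incident to $w$ among $e_{m+1}, e_{m+2}, \ldots$. After conditioning on $B_m$ and $\rgp_{n,m}$ (so $Y_m$ and $Z_m$ are fixed), one has $\tau_1 = \max_{u \in Z_m} T'_u$, because the isolated vertices of the random graph process at time $m$ are exactly $Z_m$.

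For the first assertion, if some $v \in Y_m$ satisfies $T'_v > \tau_1$ then Builder sees no incident edge to $v$ in $e_{m+1}, \ldots, e_{\tau_1}$ and hence cannot purchase one, so $v$ remains isolated in $B_{\tau_1}$ regardless of her strategy; this yields $\prob{B_{\tau_1}\text{ has an isolated vertex}} \ge \prob{\max_{v \in Y_m} T'_v > \max_{u \in Z_m} T'_u}$. For the second assertion, under the greedy strategy each $u \in Z_m$ is saved automatically (its first incident edge after $m$, which occurs by $\tau_1$, is purchased since $u$ is isolated in $B$ at that moment), and each $v \in Y_m$ is saved if and only if $T'_v \le \tau_1$, which gives $\prob{B_{\tau_1}\text{ has no isolated vertex}} = \prob{\max_{v \in Y_m} T'_v \le \max_{u \in Z_m} T'_u}$. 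Both bounds then follow once I establish $\prob{\arg\max_{w \in A} T'_w \in S} \ge (1-o(1)) \abs{S}/\abs{A}$ for $S \in \{Y_m, Z_m\}$, interpreting tied maxima as contributing to $Z_m$ for the second assertion.

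To prove this approximate-uniformity claim, I would condition on the high-probability event $\Delta(\rgp_{n,m}) \le 10 \log n$ from \Cref{prop:max-deg-bound}, so that the number of unrevealed incident edges $d_w := n - 1 - \deg_{\rgp_{n,m}}(w)$ satisfies $d_w = (1-o(1))(n-1)$ uniformly in $w \in A$. Poissonizing by assigning each unrevealed edge an i.i.d.\ $\mathrm{Uniform}(0,1)$ label (whose ranks reproduce the uniformly random permutation), $T'_w$ is then the minimum of $d_w$ such labels incident to $w$, and a perturbation argument comparing with the fully symmetric case $d_w = n-1$ (in which $\arg\max$ is exactly uniform on $A$ by symmetry) gives $\prob{\arg\max_{w \in A} T'_w = w_0} = (1 \pm o(1))/\abs{A}$ for each $w_0 \in A$. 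The main obstacle is showing the tie probability is $o(1)$, which is needed for the strict inequality in Part 1 actually to capture $(1-o(1))\abs{Y_m}/\abs{A}$; a tie requires some edge $w_1 w_2 \in R$ with both endpoints in $A$ to be simultaneously the first incident edge to both $w_1$ and $w_2$ and to have position exceeding every other $T'_w$, and these events are mutually exclusive across distinct pairs (two different edges cannot share a position), so a careful per-pair calculation in the Poissonized model, handling the correlations between $T'_w$ and $T'_{w'}$ from any shared incident edge, would bound the total tie probability by $o(1)$ and close the proof.
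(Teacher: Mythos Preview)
Your reduction to the random variable $\arg\max_{w\in A}T'_w$ is correct and is essentially the same idea the paper uses: the paper's ``last vertex of $A$ to receive an incident edge in $\hat G$'' is exactly this argmax, computed in an auxiliary process. The substantive difference is \emph{where} the symmetry comes from. The paper draws edges with repetition from all of $E(K_n)$, so that every vertex of $A$ has the same $n-1$ potential incident edges and the argmax is exactly uniform on $A$; it then pays for this trick by invoking \Cref{lemma:repetition-coupling} in Part~2 to rule out the possibility that some $w\in Y_m$ only saw \emph{old} incident edges (repeats from $\rgp_{n,m}$) by time $\hat\tau_1$. You instead work directly in the original process and assert approximate uniformity via a perturbation from the $d_w=n-1$ case.

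The gap is in this perturbation step. For Part~1 you are fine: if the argmax of the symmetric labels $\hat L_w$ lies in $Y_m$, then because $L_w=\hat L_w$ for $w\in Z_m$ while $L_w\ge\hat L_w$ always, the argmax of $L$ also lies in $Y_m$; so $\prob{\arg\max_A T'_w\in Y_m}\ge \prob{\arg\max_A \hat L_w\in Y_m}=(1-o(1))|Y_m|/|A|$ with no perturbation estimate needed. For Part~2, however, you need the reverse inequality, and the natural coupling event ``for every $w\in Y_m$ the minimum-label incident edge is a real edge, not one of the $O(\log n)$ missing ones'' fails a union bound once $|Y_m|$ is of order $n/\log n$ or larger, which is precisely the regime of interest. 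What you actually need is that every $w\in Y_m$ has some real incident edge appearing before $\tau_1$, not that its \emph{first} incident edge (over $E(K_n)$) is real --- and proving that is exactly the content of \Cref{lemma:repetition-coupling}. So your route is not genuinely different: once the perturbation argument is made rigorous it collapses to the paper's coupling-with-repetition plus \Cref{lemma:repetition-coupling}. Your tie analysis is correctly identified as the remaining issue; the paper handles it as \Cref{claim:single-isolated} by first showing only $(\log n)^2$ vertices of $A$ remain isolated at time $\tfrac{n}{2}(\log n-\log\log n)$, which is cleaner than a per-pair computation.
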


We now prove \Cref{thm:k-1:strategy} according to which Builder has a $(\tau_1, (1/2 + C^{-1/2}/2))$-strategy which succeeds in building a graph with minimum degree at least $1$, with probability at least $(1 - o(1))\frac{\sqrt{C}}{e^C}$.

\begin{proof}[Proof of \Cref{thm:k-1:strategy}]
    
Builder has the following strategy, given formally in~\Cref{algo-deg-1} (any variables inside the algorithm agree with the notation defined outside),
for
constructing a graph with minimum degree at least 1 by time \(\tau_1\).
For the first \(Cn\) edges, Builder purchases an edge if and only if both endpoints are isolated.
For the remaining edges, she purchases an edge if and only if at least one end is isolated.
\Cref{lemma:very-greedy} implies that, with high probability, \(\abs{Y_{Cn} \cup Z_{Cn}} \le \frac{n}{\sqrt{C}}\), and 
\Cref{thm:vts-small-deg} yields that with high probability \(\abs{Z_{Cn}} \ge (1-o(1)) \e^{-C} n\).
Condition on these two events.
Then from
\Cref{lemma:success-prob-formula},
\(B_{\tau_1}\) has no isolated vertices with probability at least
\(
(1 - o(1))\frac{\abs{Z_{Cn}}}{\abs{Y_{Cn}} + \abs{Z_{Cn}}}
\ge 
(1-o(1))\frac{\sqrt{C}}{\e^{C}}.
\)
It remains to show that \(e(B_{\tau_1}) \le (1/2 + C^{-1/2}/2)n\).
Inspecting \Cref{algo-deg-1} (or simply the definitions), it is not hard to see  that \(Y_i \cup Z_i \subseteq Y_{i-1} \cup Z_{i-1}\): only lines \ref{line:Y-1} and \ref{line:Y-2} add elements to \(Y_i\), and in both cases vertices move to \(Y_i\) from \(Z_{i-1}\).
Also, it is clear that \(Z_i \subseteq Z_{i-1}\), see lines \ref{line:Z-1}, \ref{line:Z-2}, \ref{line:Z-3} and \ref{line:Z-4} of \Cref{algo-deg-1}.
Therefore, during the second for-loop, i.e.\ for edges \(e_j, j\ge Cn+1\), 
Builder will purchase only edges incident to \(Y_{Cn} \cup Z_{Cn}\).
Moreover, for \(j\ge Cn+1\), \(\abs{Y_j \cup Z_j}\) strictly decreases by at least 1 for each purchased edge.
For \(j\le Cn+1\), clearly \(\abs{Y_j \cup Z_j}\) decreases by exactly 2 (lines \ref{line:YZ-1}, \ref{line:YZ-2} and \ref{line:Z-1} of \Cref{algo-deg-1}).
Therefore 
\begin{align*}
	e(B_{\tau_1}) 
	& \le e (B_{Cn}) + \abs{Y_{Cn} \cup Z_{Cn}} \\
	& = (n- \abs{Y_{Cn} \cup Z_{Cn}}) /2 + \abs{Y_{Cn} \cup Z_{Cn}}
	\le (1+C^{-1/2})n/2,
\end{align*}
as required.
\end{proof}

\begin{algorithm}[h]
\caption{Builder's \((\tau_1, (3/4-\eps)n)\)-strategy for a graph with minimum degree 1} \label{algo-deg-1}
    \begin{algorithmic}[1]
      \State \textbf{Input:} Let \(e_1,\dots, e_N\) be a uniformly random permutation of \(E(K_n)\).
      \State \(B_0 \gets \emptyset\) \Comment{\(B_i\) is Builder's graph at step \(i\)}
      \State \(X_0 \gets \emptyset\) \Comment{non-isolated vertices in \(B_i\) at step $i$}
      \State \(Y_0 \gets \emptyset\) \Comment{isolated vertices in \(B_i\), non-isolated in random graph process at step \(i\)}
      \State \(Z_0 \gets V\)
      \Comment{isolated vertices in graph process at step \(i\) (so also in \(B_i\))}
      \For{\(i=1,\dots, Cn\)}
        \If{\(e_i\subseteq Y_{i-1} \cup Z_{i-1}\) }
		\Comment{if both ends of \(e_i\) are isolated in \(B_{i-1}\)} \label{line:YZ-1}
            \State \(B_i \gets B_{i-1} \cup e_i\)
            \State \(X_i \gets X_{i-1} \cup e_i\)
			\State \(Y_i \gets Y_{i-1} \setminus e_i\) \label{line:YZ-2}
			\State \(Z_i \gets Z_{i-1} \setminus e_i\) \label{line:Z-1}
        \Else 
            \State \(B_i \gets B_{i-1}\)
            \State \(X_i \gets X_{i-1}\)
			\State \(Y_i \gets (Y_{i-1} \cup e_i) \setminus X_i\) \label{line:Y-1}
            \Comment{add only previously isolated ends of \(e_i\)}
			\State \(Z_i \gets Z_{i-1} \setminus e_i\) \label{line:Z-2}
        \EndIf
      \EndFor

      \For{\(i=Cn+1,\dots, \tau_1\)}
        \If{\(e_i \cap (Y_{i-1} \cup Z_{i-1}) \neq \emptyset \) }
        \Comment{if at least one end of \(e_i\) is isolated in \(B_{i-1}\)}
            \State \(B_i \gets B_{i-1} \cup e_i\)
            \Comment{same updates as in first for-loop}
            \State \(X_i \gets X_{i-1} \cup e_i\)
            \State \(Y_i \gets Y_{i-1} \setminus e_i\)
			\State \(Z_i \gets Z_{i-1} \setminus e_i\) \label{line:Z-3}
        \Else 
        \Comment{same updates as in first for-loop}
            \State \(B_i \gets B_{i-1}\)
            \State \(X_i \gets X_{i-1}\)
			\State \(Y_i \gets (Y_{i-1} \cup e_i) \setminus X_i\) \label{line:Y-2}
            \Comment{add only previously isolated ends of \(e_i\)}
			\State \(Z_i \gets Z_{i-1} \setminus e_i\) \label{line:Z-4}
        \EndIf
      \EndFor
    \State \textbf{return} \(B_{\tau_1}\)
    \end{algorithmic}
\end{algorithm}

Now we prove \Cref{thm:k-1:any-fails} which asserts that any $(3/4-\eps, \tau_1)$-strategy fails in building a graph with minimum degree at least $1$, with probability at least $\frac{\eps^3}{100}$.
\begin{proof}[Proof of \Cref{thm:k-1:any-fails}]
This is an immediate consequence of \Cref{lemma:success-prob-formula} and the next claim.
\begin{claim} \label{claim:close-to-o1}
    If \(\abs{Y_{i}}\le \delta n\) for all \(i\in [Cn]\), then by time \(Cn\) Builder has purchased  at least \((o_1 -6\delta C- \delta - 2\e^{-C} - o(1))n\) edges.
\end{claim}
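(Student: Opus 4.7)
My plan is to compare $B_{Cn}$ with the output of the \emph{canonical greedy} strategy, which purchases $e_j$ precisely when $e_j$ has an endpoint in $Z_{j-1}$. This strategy builds exactly $O_1 \cap \rgp_{n,j}$ at step $j$ (for each non-isolated vertex it selects its first incident edge). Let $S$ be the number of indices $j \in [Cn]$ at which $e_j$ has an endpoint in $Z_{j-1}$ but Builder does not purchase $e_j$. Since any edge Builder buys outside the greedy selection is a bonus, we have
\[
  e(B_{Cn}) \;\ge\; |O_1 \cap \rgp_{n,Cn}| - S.
\]
It then suffices to lower-bound $|O_1 \cap \rgp_{n,Cn}|$ and upper-bound $S$.

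For the first term, observe that every $e \in O_1 \setminus \rgp_{n,Cn}$ is the first edge incident to some vertex $v$, which must lie in $Z_{Cn}$ (as $e$ has yet to appear). Picking one such $v$ per edge yields an injection $O_1 \setminus \rgp_{n,Cn} \hookrightarrow Z_{Cn}$, so $|O_1 \cap \rgp_{n,Cn}| \ge |O_1| - |Z_{Cn}|$. The Cooper--Frieze result \cite{knn-cooper-frieze} (cf.\ \Cref{prop:ok-bound}) yields $|O_1| \ge (o_1 - o(1))n$ with high probability, while the $d = 0$ analogue of \Cref{thm:vts-small-deg} (already used in the proof of \Cref{thm:k-1:strategy}) gives $|Z_{Cn}| \le (1+o(1))\e^{-C} n$ with high probability. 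Together, with high probability, $|O_1 \cap \rgp_{n,Cn}| \ge (o_1 - 2\e^{-C} - o(1))n$.

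To bound $S$, I will track the set $Y$. Each skipped greedy edge forces at least one (formerly-$Z_{j-1}$) endpoint to enter $Y_j$, so if ``entries'' is the total number of (step, vertex) entries into $Y$ across $[Cn]$, then entries $\ge S$. Conversely, a vertex leaves $Y$ only when Builder purchases an edge incident to it, so if $E_Y$ counts the indices $j \le Cn$ with $e_j \cap Y_{j-1} \ne \emptyset$, then the total number of exits is at most $2E_Y$. Since $|Y_{Cn}| = \text{entries} - \text{exits}$ and $|Y_{Cn}| \le \delta n$ by hypothesis, we obtain $S \le \delta n + 2E_Y$. Conditional on the history at step $j$, the probability that $e_j$ touches $Y_{j-1}$ is at most $|Y_{j-1}|(n-1)/(\binom{n}{2} - j + 1) \le (2 + o(1))\delta$, so by \Cref{lemma:stoc-dom-sum} $E_Y$ is stochastically dominated by $\mathrm{Bin}(Cn, 3\delta)$, and Chernoff's inequality (\Cref{thm:Chernoff}) then gives $E_Y \le 3\delta Cn$ with high probability. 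Hence $S \le \delta n + 6\delta Cn$ with high probability.

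Combining the two bounds yields $e(B_{Cn}) \ge (o_1 - 2\e^{-C} - \delta - 6\delta C - o(1))n$ with high probability, as claimed. The main technical care is in the $Y$-accounting: an edge can contribute up to two entries or two exits simultaneously, which is absorbed by the factor of two in the bound $\text{exits} \le 2E_Y$, and $E_Y$ depends intricately on Builder's past decisions, which the stochastic-domination lemma handles cleanly.
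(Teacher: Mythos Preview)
Your proof is correct and follows essentially the same approach as the paper's: both compare $e(B_{Cn})$ with $|O_1^{Cn}| = |O_1 \cap \rgp_{n,Cn}|$, bound $|O_1| - |O_1^{Cn}|$ by $|Z_{Cn}|$, and control the greedy edges Builder skipped via the count of steps at which $e_j$ meets $Y_{j-1}$ (your entries/exits bookkeeping is exactly the paper's bound $|X'_{Cn}| \le 2\sum I_i$ in different notation, and your $S$ is the paper's $|E(O_1^{Cn}) \setminus E_1|$). One small point to tighten: to invoke \Cref{lemma:stoc-dom-sum} you need the conditional bound on $\mathbf{1}[e_j \cap Y_{j-1}\neq\emptyset]$ to hold for \emph{every} history, not only those satisfying the hypothesis $|Y_{j-1}|\le \delta n$; the standard fix (which the paper uses) is to fold the condition $|Y_{j-1}|\le \delta n$ into the indicator itself, obtain $\sum I_j \le 3\delta Cn$ with high probability unconditionally, and then observe that on the hypothesised event $\sum I_j = E_Y$.
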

Set \(\delta = \frac{\eps}{50C}\)
and suppose Builder follows a strategy such that \(e(B_{\tau_1}) \le (o_1-\eps)n\).
Then \(e(B_{\tau_1}) < (o_1 -7\delta C-\delta-2\e^{-C})n\) (recalling \(\eps = C^{-1/2}\)), so  the contrapositive of \Cref{claim:close-to-o1} 
implies that for some \(i\in [Cn]\), \(\abs{Y_i} \ge \delta n\).
Hence, by \Cref{lemma:success-prob-formula}, \(B_{\tau_1}\) has an isolated vertex with probability at least 
\( (1-o(1))\frac{\abs{Y_i}}{\abs{Y_i} + \abs{Z_i}} \ge  \frac{\delta}{2} = \frac{\eps}{100C} = \frac{\eps^3}{100}\).
\begin{proof}[Proof of \Cref{claim:close-to-o1}]
    For \(i\in [Cn]\), let \(I_i\) be the indicator random variable for the event that \(e_i\) is incident to \(Y_{i-1}\) and \(\abs{Y_{i-1}}\le \delta n\).
    Conditioned on any choice of values for \(I_1,\hdots, I_{i-1}\), the expected value of \(I_i\) is at most 
    \(
    \frac{\delta n \cdot (n-1)}{\binom{n}{2}} = 2 \delta.
    \)
    Hence, \Cref{lemma:stoc-dom-sum} implies that \(\sum_{i=1}^{Cn} I_i\) is stochastically dominated by a sum of \(Cn\) independent indicator random variables with mean \(2\delta\) each, for which we can apply Chernoff's bound.
    The latter gives that, with high probability,
    \(\sum_{i=1}^{Cn} I_i \le 3\delta C n\). 
	Condition on this being the case.

    Because the claim assumes that \(\abs{Y_i} \le \delta n\) for all \(i\in [Cn]\), \(I_i\) is 1 if and only if \(e_i\) is incident to \(Y_{i-1}\).
    Hence,
    \(\sum_{i=1}^{Cn} I_i\) is precisely the number of edges \(e_i\) incident to \(Y_{i-1}\). 
    
    Let
    \(X_{Cn}' = X_{Cn} \cap \left(\bigcup_{i=1}^{Cn-1} Y_i \right)\) be
    the set of vertices for which Builder did not purchase the first incident edge in the random graph process, but did purchase a later edge (by step $Cn$).
    Since a vertex may move from \(Y_{i-1}\) to \(X_i\) only if there is an edge incident to \(Y_{i-1}\),
    \(
    \abs{X_{Cn}'} \le 2 \sum_{i=1}^{Cn} I_i \le 6 \delta C n.
    \)
    Therefore, all but at most \(6 \delta C n\) vertices in \(X_{Cn}\) are such that Builder bought the first incident edge to them.
    Let \(E_1 \subseteq E(B_{Cn})\) be the collection of edges that Builder bought and were the first incident edge to one of their endpoints.

    We will compare Builder's graph with the \(O_1\) graph at this stage of the random graph process.
    Recall that \(O_1\) consists of the first edge incident to each vertex in the random graph process, and that $O_1$ with high probability has at least $(o_1 - o(1))n$ edges; condition on this being the case.
    Let \(O_1^{Cn}\) be the subgraph of \(O_1\) with edges \(E(O_1) \cap \{e_1,\hdots, e_{Cn}\}\), so in particular \(E(O_1^{Cn}) \supseteq E_1\).
    Notice that any other edge in \(E(O_1^{Cn})\) is the first edge in the random graph process incident to some vertex in \(X_{Cn}' \cup Y_{Cn}\), so 
    \(\abs{E(O_1^{Cn})\setminus E_1} \le \abs{X_{Cn}' \cup Y_{Cn}}\).
    Therefore,
    \begin{equation} \label{eq:eo1cn-ub}
        \abs{E(O_1^{Cn})} \le \abs{E_1} + \abs{X_{Cn}'} + \abs{Y_{Cn}}
    \le \abs{E_1} + 6\delta C n + \delta n.
    \end{equation}
    Similarly, \(\abs{E(O_1) \setminus E(O_1^{Cn})} \le \abs{Z_{Cn}}\): every edge in \(E(O_1) \setminus E(O_1^{Cn})\) is the first edge in the random graph process incident to a vertex in \(Z_{Cn}\).
    Hence, with high probability,
    \(\abs{E(O_1)} -\abs{E(O_1^{Cn})} \le \abs{Z_{Cn}} \le 2 \e^{-C} n,\)
    using \Cref{thm:vts-small-deg} for the last inequality.
    Combining this with \eqref{eq:eo1cn-ub} we deduce that
    \[
        \abs{E(O_1)} \le \abs{E_1} + 6\delta C n + \delta n + 2\e^{-C} n \le e(B_{Cn}) + 6 \delta Cn + \delta n + 2\e^{-C}n.
    \]
    Rearranging gives \(e(B_{Cn}) \ge (o_1 - 6 \delta C  -  \delta - 2\e^{-C} - o(1)) n\), as required.
\end{proof}
This completes the proof of \Cref{thm:k-1:any-fails}.
\end{proof}

All that remains is to prove \Cref{lemma:success-prob-formula}.

\begin{proof}[Proof of \Cref{lemma:success-prob-formula}]

    As in \Cref{claim:zero-vts-get-deg-k-last},
    draw with repetition edges \((\he_i)_{i\ge 1}\) from \(E(K_n)\)
    and let \((\hG_i)_{i\ge 1}\) be the corresponding graph process, i.e.\ \(\hG_i\) is the (simple) graph spanned by \(\he_1,\hdots, \he_i\).
    We couple \((\hG_i)_{i\ge 1}\) with \((\rgp_{n,j})_{j\ge m+1}\) in the obvious way; see the proof of \Cref{claim:zero-vts-get-deg-k-last} for the details.
    Let 
    \[
    \htau_1 = \min \{i \ge 1: \text{for every } v\in Y_m \cup Z_m,\, \deg_{\hG_i}(v) \ge 1 \}.
    \]
    \begin{claim} 
    \label{claim:single-isolated}
        With high probability, a unique endpoint of \(\he_{\htau_1}\) is isolated in \(\hG_{\htau_1-1}\).
    \end{claim}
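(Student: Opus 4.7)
The plan is to translate the claim into the random graph process generated by the distinct edges of $(\he_i)$, and bound the probability of the bad event --- both endpoints of $\he_{\htau_1}$ isolated in $\hG_{\htau_1 - 1}$ --- by $o(1)$ via a union bound over candidate ``last pairs''. Let $(e'_j)_{j \ge 1}$ list the distinct edges of $(\he_i)$ in order of first appearance; this is a uniformly random permutation of $E(K_n)$, and $\hG_i$ has edge set $\{e'_1, \dots, e'_{k(i)}\}$ where $k(i)$ counts the distinct draws in the first $i$. Let $T_S$ be the hitting time, in this random graph process, for every vertex of $S := Y_m \cup Z_m$ to have positive degree. Since $\he_{\htau_1}$ covers a previously isolated vertex, under the coupling it is necessarily a new edge, so $\he_{\htau_1} = e'_{T_S}$ and $\hG_{\htau_1 - 1}$ has edge set $\{e'_1, \dots, e'_{T_S - 1}\}$. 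By \Cref{thm:vts-small-deg} applied to $\rgp_{n,m}$, $|S| \ge |Z_m| = \Omega(n)$ with high probability, so in particular $|S| \to \infty$; I condition on this.

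For each pair $\{u, v\}$ with $\{u, v\} \cap S \neq \emptyset$, let $E_{uv}$ be the event that $uv$ is the first incident edge of both $u$ and $v$ in the permutation $(e'_j)$ and, for every $w \in S \setminus \{u, v\}$, some incident edge of $w$ precedes $uv$. Whenever the bad event holds, some such $E_{uv}$ must occur (taking $u$ to be the last $S$-vertex covered). Attaching i.i.d.\ $\mathrm{Unif}[0,1]$ marks $X_e$ to the edges of $K_n$ and integrating over the mark of $uv$, a Poisson-type computation of the form
\[
    \prob{E_{uv}} = (1 + o(1))\,\frac{1}{n}\int_0^\infty e^{-2s}\,(1 - e^{-s})^{|S| - O(1)}\, ds = O\!\left(\frac{1}{n|S|^2}\right)
\]
gives the per-pair bound, and summing over the $O(n|S|)$ eligible pairs yields the required $O(1/|S|) = o(1)$.

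I expect the main obstacle to be making the Poisson-type estimate on $\prob{E_{uv}}$ precise. The integrand involves the conditional probability that every $w \in S \setminus \{u, v\}$ has some incident edge with mark at most $x$, once we delete all edges at $u$ and $v$; these events are positively correlated across $w$, since the edge $w_1 w_2$ is shared between $w_1$ and $w_2$. To convert the heuristic product $(1 - (1-x)^{n-3})^{|S| - O(1)}$ into a rigorous bound, I would either apply an FKG-type inequality to lower-bound the product and combine it with a Chebyshev-based upper bound on the number of still-uncovered $w$'s, or carry out a direct Poisson approximation for that count. Once this calculation is in place, the union bound closes the claim.
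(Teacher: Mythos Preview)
Your reduction to the distinct-edge process and the decomposition into events $E_{uv}$ is sound, but the per-pair estimate $\prob{E_{uv}} = O\!\big(1/(n|S|^2)\big)$ does not follow from the tools you name. Write $\prob{E_{uv}} = \int_0^1 (1-x)^{2n-4}P(x)\,dx$, where $P(x)=\prob{N_x=0}$ and $N_x$ counts the still-uncovered $w\in S\setminus\{u,v\}$ at mark $x$. For the integral to be $O\!\big(1/(n|S|^2)\big)$ you need $P(x)$ to decay like $e^{-\expect{N_x}}$ in the range where $\expect{N_x}$ is large. FKG goes the wrong way: the coverage events $A_w$ are increasing and positively correlated, so Harris/FKG yields $P(x)\ge\prod_w\prob{A_w}$, a \emph{lower} bound on $P(x)$. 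Chebyshev gives only $P(x)\le \Var{N_x}/\expect{N_x}^2 \approx 1/\expect{N_x}+O(x)$; substituting this bound, the $1/\expect{N_x}$ term contributes $\frac{1}{|S|}\int_0^1(1-x)^{n-1}\,dx=O\!\big(1/(n|S|)\big)$, so $\prob{E_{uv}}=O\!\big(1/(n|S|)\big)$ and the sum over the $\Theta(n|S|)$ eligible pairs is only $O(1)$. A genuine Poisson approximation for $N_x$ would close the gap, but with $|S|=\Theta(n)$ every two ``uncovered'' indicators share an edge, so Janson's $\Delta$ term swamps $\mu$ for small $x$ and this becomes a nontrivial side project rather than a line.

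The paper's proof is much shorter and avoids the correlation issue altogether. It fixes $t=\tfrac{n}{2}(\log n-\log\log n)$, observes that the expected number of vertices isolated in $\hG_t$ is at most $\log n$, hence at most $(\log n)^2$ with high probability by Markov, and then notes that among the remaining $\htau_1-t=O(n\log n)$ draws the expected number landing with both ends in this small set is $O(n\log n)\cdot(\log n)^4/n^2=o(1)$. This is precisely the ``split at a sub-threshold time and treat the early regime as one global event'' fix to your scheme, rather than trying to push through a per-pair bound.
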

    We prove \Cref{claim:single-isolated} in the end. Condition on its conclusion holding.
    Let \(v \in Y_{m} \cup Z_{m}\) be the endpoint of \(\he_{\htau_1}\) in \(Y_{m} \cup Z_{m}\) that is isolated in \(\hG_{\htau_1-1}\) and notice that, by symmetry, \(v\) is uniformly distributed in  \(Y_{m} \cup Z_{m}\).
    
    Suppose \(v \in Y_m\).
    Then every vertex is non-isolated in the random graph process before \(\he_{\htau_1}\) is drawn:
    vertices in \(X_m \cup Y_m\) are non-isolated in \(\rgp_{n,m}\), and every vertex in \(Z_m\) has an incident edge among \(\{\he_1,\hdots, \he_{\htau_1-1}\}\), so indeed
    \(\rgp_{n,\tau_1} \subseteq \rgp_{n,m} \cup \hG_{\htau_1-1}\).
    Moreover, \(v\) is isolated in \(B_{\tau_1}\): none of the edges 
    \(e_j, j\in [m+1,\tau_1]\), is incident to \(v\), so \(v\) is isolated in \(B_{\tau_1}\), no matter what strategy Builder follows for these steps, since she purchased no edge from \(e_1,\hdots, e_m\) that is incident to \(v\).
	This proves the first part of the lemma, as the probability that $\hat{e}_{\htau_1}$ has a unique isolated vertex in $\hG_{\htau_1-1}$, which is in $Y_m$, is at least $(1 - o(1))\frac{|Y_m|}{|Y_m|+|Z_m|}$. 

    To prove the second part of the lemma, suppose \(v \in Z_m\).
    Then \(\he_{\htau_1}\) is the first incident edge to \(v\) in the random graph process, 
    and since all other vertices in \(Z_m\) have an incident edge among \(\he_1,\hdots, \he_{\htau_1-1}\), we deduce
     \(e_{\tau_1}  = \he_{\htau_1}\).
    Notice that Builder's strategy of purchasing the first incident edge to each vertex ensures that \(\deg_{B_{\tau_1}}(u) \ge 1\) for all \(u\in Z_m\), since each edge incident to \(u\in Z_m\) is disjoint from \(\rgp_{n,m}\), and thus she can purchase it.
    Hence the lemma follows if, with high probability, for every \(u\in Y_{m}\), there is at least one incident edge in \(\hG_{\htau_1} \setminus \rgp_{n,m}\).
    This follows from the assumption that $v \in Z_{\htau_1}$ which implies that every vertex in $Y_m$ has an incident edge in $\hG_{\htau}$, from \Cref{lemma:repetition-coupling}, which is applicable since, with high probability, \(\Delta(\rgp_{n,m})  \le 10 \log n\) by \Cref{prop:max-deg-bound} and 
    \(e(\hG_{\htau_1}) = \Theta(n\log n)\), the latter being a direct consequence of \Cref{thm:min-deg-k}. 
    Indeed, if \(e(\hG_{\htau_1})\ge n \log n\), by \Cref{thm:min-deg-k}, with high probability \(\hG_{\htau_1}\) has no isolated vertices.
    If \(e(\hG_{\htau_1}) \le \frac{n\log n}{3}\), then \(\hG_{\htau_1} \cup \rgp_{n,m} \subseteq \rgp_{n,m'}\) where \(m' = \frac{n \log n}{3} + Cn\), and
    \Cref{thm:min-deg-k} yields that, with high probability, \(\hG_{\htau_1} \cup \rgp_{n,m}\) has isolated vertices, which  must be in \(Z_m\).
    
    \begin{proof}[Proof of \Cref{claim:single-isolated}]
    Let \(t = \frac{n \left(\log n - \log \log n \right)}{2}\).
    The expected number of isolated vertices in \(\hG_t\) among \(Y_m \cup Z_m\) is at most
    \begin{align*} 
       &{\quad } \abs{Y_{m}\cup Z_m} \cdot \left(1- \frac{n-1}{\binom{n}{2}}\right)^{\frac{n}{2} \left(\log n - \log \log n \right)}\\
    &\le
    n \ \exp\left(-\frac{2}{n}\, \frac{n}{2} \left( \log n - \log \log n\right)\right)
    \\
    &= \log n.
    \end{align*}
    Therefore, by Markov's inequality, with high probability the number of isolated vertices in \(\hG_t\) is at most \((\log n )^2\).
    We condition on this event.
    We condition also on the event that \(\htau_1 = O(n\log n)\), which holds with high probability: indeed, the argument in the paragraph immediately preceding the current claim yields that, with high probability, \(e(\hG_{\htau_1}) = \Theta(n \log n)\), and the calculations at the beginning of \Cref{lemma:repetition-coupling} show that with high probability only \(o(n\log n)\) edges among \(\Theta(n \log n)\) drawn with replacement are repeated.
    
    The claim will follow if 
    among the remaining \(\htau_1 - t\) edges  (drawn with replacement) none has both ends among \(Y_m \cup Z_m\) which are isolated in \(\hG_t\).
    As the following calculation shows,
    the expected number of such edges is \(o(1)\):
    \[
    (\htau_1 - t) \ \frac{\binom{(\log n)^2}{2}}{ \binom{n}{2}}
    \le O(n \log n) \cdot O\left( \frac{(\log n)^4}{n^2}\right)
    = o(1).
    \]
    Hence, from Markov's inequality, with high probability no such edges are drawn.
\end{proof}
    This completes the proof of \Cref{lemma:success-prob-formula}.
\end{proof}

\section{Concluding remarks}
In this paper we studied the problem of constructing a graph with minimum degree \(k\ge 1\) in the controlled random graph process introduced in~\cite{budget}.
\Cref{thm:k-at-least-2} disproves \cite[Conjecture 7]{budget} in a strong form for \(k\ge 2\). 
Determining the optimal budget that
yields a graph with minimum degree at least \(k\) by time \(\tau_k\), with at least a given probability,
remains an interesting open problem.
Similarly,
\Cref{thm:k-1:any-fails,thm:k-1:strategy} disprove \cite[Conjecture 7]{budget} for \(k=1\). It would be interesting to determine the exact dependence between the budget and the optimal success probability.

\bibliographystyle{amsplain}
\bibliography{main}

\end{document}